\numberwithin{equation}{section}
\title{Numerical schemes for a class of nonlocal conservation laws:\newline a general approach}
\author{%
  Jan Friedrich\footnotemark[1],\
  Sanjibanee Sudha\footnotemark[2]\ \ and Samala Rathan\footnotemark[2] 
}
\renewcommand{\rv}[1]{\textcolor{black}{#1}}
\begin{document}

\footnotetext[1]{RWTH Aachen University, Institute of Applied Mathematics, 52064 Aachen, Germany (friedrich@igpm.rwth-aachen.de)}
\footnotetext[2]{Department of Humanities and Sciences, Indian Institute of Petroleum and Energy, Visakhapatnam, Andhra Pradesh, India-530003 (\{sudhamath21,rathans.math\}@iipe.ac.in)}
\maketitle

\begin{abstract}
In this work we present a rather general approach to approximate the solutions of nonlocal conservation laws.
In a first step, we approximate the nonlocal term with an appropriate quadrature rule applied to the spatial discretization.
Then, we apply a numerical flux function on the reduced problem.
We present explicit conditions which such a numerical flux function needs to fulfill.
These conditions guarantee the convergence to the weak entropy solution of the considered model class.
Numerical examples validate our theoretical results and demonstrate that the approach can be applied to other nonlocal problems.
\end{abstract}
\medskip
\noindent \textbf{Keywords:} Nonlocal conservation laws, Monotone schemes, Traffic flow, Sedimentation model, Finite-volume schemes
\medskip

\section{Introduction}
Scalar conservation laws with a nonlocal transport term appear in a variety of applications and can be formally summarized by
\begin{align}\label{eq:generalnonloc}
    \partial_t \rho +\partial_x F(t,x,\rho,R)=0\quad (t,x)\in\mathbb{R}^+\times\mathbb{R},
\end{align}
where $\rho$ is the state variable, $t$ the time, $x$ the space variable, $F$ a flux function and $R$ an integral evaluation over the space.
Typically, $R$ is a convolution involving the state variable $\rho$ over a possible compact space.
Such nonlocal terms can be found in various physical applications such as supply chains~\cite{goettlich2010supplychains}, sedimentation~\cite{betancourt2011nonlocal,burger2022hilliges}, conveyor belts~\cite{gottlich2014modeling, rossi2020well}, crowd motion~\cite{colombo2012class}, and traffic flow~\cite{BlandinGoatin2016, chiarello2018global, chiarello2019multiclass, friedrich2020onetoone, friedrich2020nonlocal, friedrich2018godunov, goatin2016well,keimer2018bounded}.
Thereby, these equations are studied theoretically for existence and uniqueness \cite{aggarwal2015nonlocal,amorim2015numerical, KeimerPflug2017, keimer2018multi}.
The challenge here is that classical approaches for local conservation laws cannot be applied\rv{, e.g. due to the nonlocal term}, no Riemann-solvers are currently available.
But one can use numerical schemes \cite{aggarwal2015nonlocal,amorim2015numerical} and Kruzkov's doubling of variables technique \cite{kruvzkov1970first} or fixed point approaches based on the method of characteristics \cite{KeimerPflug2017, keimer2018multi,keimer2018bounded} to prove uniqueness and existence of solutions.
Solutions must be understood in the weak sense by removing the strong regularity conditions on the solution as discontinuities may be present.
For specific flux functions, i.e. of the form $F(t,x,\rho,R)=\rho v(R)$ with a suitable nonlocal term $R$ and velocity function $v$, weak solutions are already unique \cite{KeimerPflug2017}, but for more general flux functions entropy conditions are used to single out the physically correct solutions \cite{aggarwal2015nonlocal,amorim2015numerical}.
Nevertheless, in the aforementioned works and in all the aforementioned applications, mostly specific forms of the involved flux functions and nonlocal terms are assumed.
In its whole generality the well- or ill-posedness \eqref{eq:generalnonloc} is not proved yet.
\par
From a numerical point of view the situation is very similar.
In most of the applications tailored Lax-Friedrichs-type numerical schemes have been used as a tool to prove the existence of solutions.
However, it is well known that Lax-Friedrichs type numerical schemes produce diffusive behavior.
To avoid such a problem,  a Godunov-type scheme for a specific traffic flow model has been introduced in \cite{friedrich2018godunov}.
Furthermore, higher-order schemes have been considered~\cite{ChalonsGoatinVillada2018,friedrich2019maximum}, but they rely on already derived numerical flux functions.
\rv{Numerical flux functions can also be derived by using Lagrangian-remap schemes as in \cite{abreu2022lagrangian,chiarello2020lagrangian}.
So far, these approaches are tailored to specific flux functions\footnote{We note that \cite{abreu2022lagrangian} deals with a similar flux function as we do in the following. However, for the rigorous proof of convergence they consider a simpler version of the flux.} and nonlocal terms, too.}
An alternative is to use numerical schemes without dissipation which are based on the method of characteristics \cite[Section 5]{KEIMER2023}, although the flux function must allow for a unique weak solution, i.e. be of the form $F(t,x,\rho,R)=\rho v(R)$.

Let us compare this situation with a scalar one-dimensional (local) hyperbolic conservation law of the form
\begin{equation}
\begin{aligned}
     \label{eq:scalar conservation law form}
    &\rho_t+f(\rho)_x=0,\,\,\,\,\, (t,x)\in\mathbb{R}^+\times \mathbb{R}, \\
    &\rv{\rho(0,x)}=\rho_{0}(x),\,\, x \in \mathbb{R}.
    \end{aligned}
\end{equation}
Here, not only the existence and uniqueness theory is well understood, but also the construction of numerical solutions for local conservation laws.
For more details, we refer to \cite{eymard2000finitevolume,GodlewskiRaviart,holden2015front,leveque2002finite,leveque1992numerical, thomas2013numerical}.
In particular, to solve the equation (\ref{eq:scalar conservation law form}) numerically,  one can use the most familiar three-point schemes in conservation form, i.e.,
\begin{equation}
    \begin{aligned}
       \label{eq:scalar conservative three-point} 
       \rho_{j}^{n+1}=\rho_j^n-\dfrac{\Delta t}{\Delta x}\bigg(G(\rho_j^n,\rho_{j+1}^n)-G(\rho_{j-1}^n,\rho_j^n)\bigg),
    \end{aligned}
\end{equation} 
where $\rho_j^n \approx \rho( t^n,x_j),$  numerical flux $G$ is Lipschitz continuous function and is consistent with the physical flux $f=f(u),$ i.e., $G(u,u)=f(u)$. 
If the numerical flux $G$ is monotone, then the scheme \eqref{eq:scalar conservative three-point} is total variation diminishing (TVD) and satisfies the maximum principle \cite{eymard2000finitevolume, crandall1980monotone, leveque2002finite}.
By choosing a suitable monotone numerical flux function $G,$ the numerical solution converges to a weak entropy solution of the conservation law \eqref{eq:scalar conservation law form}. 
In particular, there are several choices of monotone numerical flux functions, so one can choose the flux functions that are the best suited for each individual modeling problem.

As aforementioned for nonlocal conservation laws, such a general class of numerical schemes does not currently exist. 
One difficulty is that in general the schemes are no longer monotone due to the nonlocal term, see e.g. \cite{BlandinGoatin2016,huang2023asymptotically}.
Recently, a first attempt to consider a more general class of numerical schemes is considered in \cite{huang2023asymptotically}, however they consider very specific assumptions on the flux function, see Remark \ref{rem:comparison} for a more detailed discussion.
In this work we present a different approach to construct a general class of numerical schemes than in \cite{huang2023asymptotically}.
In particular, we consider a rather general nonlocal flux function to further fill the gap in the current literature.
The derived schemes will thereby share some similarities with the class of monotone schemes for local conservation laws.

We organize this paper as follows: In Section \ref{sec:model}, we recall the main results of \cite{chiarello2018global} on the existence and uniqueness of entropy solutions for a class of nonlocal conservation laws.
Then in Section \ref{sec:numclass} we present an approach to construct numerical schemes for the considered model class.
This includes necessary assumptions on the numerical flux function.
Furthermore, we present several numerical schemes belonging to this class, e.g. a newly derived Engquist-Osher type scheme.
Section \ref{sec:result} presents our main result on the convergence of the calls of numerical schemes.
The proof is given in the remaining section and we discuss some important properties of the proposed schemes, such as a maximum principle, bounded variation (BV) and time-continuity estimates.
In Section \ref{sec:numerics}, we present numerical examples and compare the $L^1$-accuracy of some exemplary schemes belonging to the proposed class.
Furthermore, we calculate the numerical convergence rates and demonstrate how the approach can be adapted to other classes of nonlocal conservation laws.
Finally, concluding remarks are given in Section \ref{sec:conclusion}.

\section{Modeling class}\label{sec:model}
The goal of this work is to present a general approach to construct numerical schemes for nonlocal conservation laws. Therefore, we rely on an already well-posed class of nonlocal conservation laws.\\
Here, we briefly recall the model class and the main result of \cite{chiarello2018global}.
In \cite{chiarello2018global}, the following nonlocal conservation law is studied:
\begin{align}\label{eq:ND}
&\partial_t \rho + \partial_x \left( g(\rho) V(t,x)\right)=0,  \\
&V(t,x):=v\left(\int_x^{x+\ndt} \omega_\eta(y-x)\rho(t,y)dy \right)\label{eq:ND:conv}.
\end{align}
This model considers a mean downstream density and is commonly used in various traffic flow models.
For example, it can be used to describe a nonlocal LWR model or Arrhenius-type look-ahead dynamics.
To prove well-posedness, the authors of \cite{chiarello2018global} need the following hypotheses on the functions involved:
\begin{assumption}\label{ass:model}
We assume the following hypotheses: 
\begin{enumerate}
    \item $\rho(0,x)=\rho_0(x) \in \BV(\mathbb{R},I), \, \, I=\rv{[\rho_{\min},\rho_{\max}]}\subseteq\mathbb{R^+},$
    \item $g\in C^1(I;\mathbb{R}^+),$
    \item $v\in C^2(I;\mathbb{R}^+),\quad v'\leq 0,$
    \item  $\omega_{\eta}\in C^1([0,\eta]),\mathbb{R^+})$\,\,\,\, \text{with}\,\,\, $\omega^{\prime}\leq0,$\,\,\,$\int_{0}^{\eta}\omega_{\eta}(x)dx=1$ \,\,\,$\forall\eta>0,$\,\,\,\,$\underset{\eta\to\infty}{\lim} \omega_{\eta}(0)=0,$
\end{enumerate}
\rv{where $\rho_{\min}:=\underset{\mathbb{R}}{\min}(\rho_0)$ and $\rho_{\max}:=\underset{\mathbb{R}}{\max}(\rho_0)$.}
\end{assumption}
Note that, unlike \cite{chiarello2018global}, we consider  normalized kernel functions $\wt$, which means that the $L^1$-norm is one and the interval for the density is $[0,1]$.
Due to the presence of the function $g$, which can be nonlinear, the results of e.g. \cite{KeimerPflug2017} cannot be applied.
These results hold only for the special case where $g$ is the identity.
Hence, instead of weak solutions the authors of \cite{chiarello2018global} consider, as usual for local conservation laws, weak entropy solutions of the problem \eqref{eq:ND}.
These are intended in the following sense, see \cite[Definition 1.1]{chiarello2018global}:
\begin{definition}\label{def:weakentropy} A function $\rho\in(L^1\cap L^{\infty}\cap \BV)(\mathbb{R}^+\times\mathbb{R};\mathbb{R})$ is a weak entropy solution of \eqref{eq:ND} with initial condition $\rho(0,x)=\rho_0(x)$, if
\begin{align}
    \label{eq:entropy}
    \int_0^{+\infty}\int_{-\infty}^{+\infty}\bigg(|\rho-k|\phi_t+|\rho-k|V\phi_x-\sign(\rho-k)kV_x\phi\bigg)(t,x)dxdt+\int_{-\infty}^{+\infty}|\rho_0(x)-k|\phi(0,x)dx\geq0
\end{align}
for all $\phi\in C_{c}^1(\mathbb{R}^2;\mathbb{R})\, \text{and}\, k\in\mathbb{R}.$
\end{definition}
Now, we can state the main result of \cite{chiarello2018global}:

\begin{theorem}[compare to Theorem 1.2 of \cite{chiarello2018global}]
Let the Assumptions \ref{ass:model} hold, then the Cauchy problem 
\begin{eqnarray*}
\begin{cases}
&\partial_t\rho(t,x)+\partial_x\bigg(g(\rho(t,x))V(t,x)\bigg)=0,\, \, x\in\mathbb{R},t>0,\\
 &\rho(0,x)=\rho_0(x), \, \,\, x\in\mathbb{R}
\end{cases}
\end{eqnarray*}
with $V(t,x)$ defined as in \eqref{eq:ND:conv} admits a unique weak entropy solution in the sense of Definition \ref{def:weakentropy}, such that
$$\rho_{\min}=\underset{\mathbb{R}}{\min}(\rho_0)\leq\rho(t,x)\leq\underset{\mathbb{R}}{\max}(\rho_0)=\rho_{\max},\,\,\,   \text{for a.e.}\,\,\,\, x\in\mathbb{R}, t>0.$$
\end{theorem}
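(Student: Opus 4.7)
The plan is to adapt the standard existence/uniqueness program for local scalar conservation laws (approximation, compactness, entropy inequality in the limit, Kruzkov doubling of variables), paying special attention to the nonlocal convolution term $V(t,x)$, which, under Assumption~\ref{ass:model}, depends only on the density through a smooth kernel $\omega_\eta$ with bounded derivative. The key structural observation is that for each fixed $\rho \in L^\infty \cap \BV$, the map $x \mapsto V(t,x)$ is a Lipschitz function whose Lipschitz norm, as well as its $L^\infty$ norm, can be controlled in terms of $\|\rho\|_\infty$ and $\|\omega_\eta\|_{W^{1,1}}$. This turns the nonlocal equation into something resembling a local conservation law with a spatially varying coefficient, plus a controllable source term arising from $V_x$.

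First I would construct approximate solutions $\rho^{\Dx}$ by a Lax-Friedrichs type finite-volume scheme, with $V$ at each cell interface computed from the current piecewise-constant density via a quadrature of \eqref{eq:ND:conv}. Under a suitable CFL condition, monotonicity of the Lax-Friedrichs flux in the local arguments, together with $v\geq 0$ and the maximum principle structure, yields the $L^\infty$ bound $\rho_{\min}\le \rho_j^n\le \rho_{\max}$. Next I would derive a uniform BV estimate: writing the scheme in incremental form and using $v'\le 0$, $\omega_\eta'\le 0$, and the $W^{1,\infty}$ control on $V$ coming from the previous time step, one obtains a Gronwall-type inequality of the form
\begin{equation*}
\TV(\rho^{n+1}) \le (1+C\,\Dt)\,\TV(\rho^n) + C\,\Dt,
\end{equation*}
giving $\TV(\rho^n)\le e^{CT}(\TV(\rho_0)+CT)$ uniformly in $\Dx$. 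A standard computation then produces $L^1$ Lipschitz continuity in time. Helly's theorem yields a subsequence converging in $L^1_{\mathrm{loc}}$ to some $\rho\in L^\infty\cap \BV$.

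Then I would pass to the limit. A discrete entropy inequality, obtained by applying the Crandall–Majda style inequality to the Lax-Friedrichs fluxes with the spatially varying velocity $V^n_{j\pm 1/2}$, reads schematically
\begin{equation*}
|\rho_j^{n+1}-k| \le |\rho_j^n-k| - \tfrac{\Dt}{\Dx}\bigl(\mathcal{G}_{\jph}-\mathcal{G}_{\jmh}\bigr) - \Dt\,\sign(\rho_j^{n+1}-k)\,k\,(V_x)_j^n + \mathcal{O}(\Dt\,\Dx),
\end{equation*}
with $\mathcal{G}$ the appropriate numerical Kruzkov flux. Multiplying by a test function $\phi\ge 0$, summing, and using the BV and $L^\infty$ bounds along with the uniform Lipschitz continuity of $V$ (which ensures $V^{\Dx}\to V[\rho]$ and $(V^{\Dx})_x \to V_x[\rho]$ in $L^1_{\mathrm{loc}}$), one recovers \eqref{eq:entropy} in the limit by a Lax–Wendroff type argument.

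For uniqueness, I would apply Kruzkov's doubling of variables to two entropy solutions $\rho,\sigma$ with associated nonlocal velocities $V[\rho],V[\sigma]$. The nonstandard term produced by the difference $V[\rho]-V[\sigma]$ is handled by the Lipschitz continuity of $v$ and $\omega_\eta$: one obtains
\begin{equation*}
\|V[\rho](t,\cdot)-V[\sigma](t,\cdot)\|_{L^\infty} + \|V_x[\rho](t,\cdot)-V_x[\sigma](t,\cdot)\|_{L^1} \le C\,\|\rho(t,\cdot)-\sigma(t,\cdot)\|_{L^1},
\end{equation*}
so that the doubling inequality closes to a Gronwall estimate on $\|\rho(t,\cdot)-\sigma(t,\cdot)\|_{L^1}$, yielding $L^1$ stability and hence uniqueness. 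The main obstacle is the BV estimate for the scheme, because the nonlocal source term $g(\rho)V_x$ must be controlled by quantities that are themselves built from $\rho$; the argument needs the monotonicity of $\omega_\eta$ and the sign of $v'$ to prevent blow-up of total variation, and this is the step that genuinely uses all parts of Assumption~\ref{ass:model}.
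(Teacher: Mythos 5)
Your proposal follows essentially the same route as the proof this paper relies on: the theorem is quoted from \cite{chiarello2018global}, where existence is obtained exactly as you describe (a Lax--Friedrichs-type approximation with the nonlocal term evaluated on the piecewise-constant density, an $L^\infty$ maximum principle and exponential-in-time BV bounds under a CFL condition, Helly compactness, and a discrete entropy inequality passed to the limit by a Lax--Wendroff argument), and uniqueness via Kruzkov's doubling of variables combined with the $L^1$-Lipschitz dependence of $V[\rho]$ and $V_x[\rho]$ on $\rho$. Your emphasis on the BV estimate as the step that genuinely uses $v'\le 0$ and $\omega_\eta'\le 0$, and on the non-monotonicity of the scheme in the nonlocal arguments, matches the structure of that argument, so there is nothing substantive to add.
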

In \cite{chiarello2018global} a Lax-Friedrichs-type numerical scheme is used as a tool to prove the existence of solutions.
Our main goal now is to derive a complete class of numerical schemes, which provides the flexibility to choose or develop numerical schemes that are the best suited for each application.
In particular, the numerical schemes may be less diffusive than a Lax-Friedrichs-type numerical scheme.

\section{A general class of numerical schemes for nonlocal conservation laws}\label{sec:numclass}
In the following we will introduce a more general class of numerical schemes for the problem \eqref{eq:ND}.
From now on, unless otherwise stated, $\norm{\cdot}$ defines for simplicity the $L^\infty$ norm over the underlying space, e.g., $\norm{v}:=\norm{v}_{L^\infty([\rho_{\min},\ \rho_{\max}])}$.

We discretize space and time by an equidistant grid, where $\Dx$ is the step size in space and $\Dt$ is the step size in time.
Hence, $t^n=n\Dt$ with $n\in\N$ describes the time grid and $x_j=j\Dx,\ j\in\Z$ the cell centers of the space grid with the cell interfaces $x_\jmh$ and $x_\jph$.
To construct a finite volume approximation $\dr$ such that $\dr(t,x)=\rho_j^n$ for $(t,x)\in[t^n,t^{n+1})\times [x_\jmh,x_\jph)$, we approximate the initial data by
\begin{align*}
    \rho_j^0=\frac{1}{\Dx} \int_{x_\jmh}^{x_\jph} \rho_0(x) dx,\quad j\in\Z.
\end{align*}
The scheme is then given by 
\begin{align}\label{eq:scheme}
    \rho_j^{n+1}=\rho_j^n-\lambda \left(F_\jph^n(\rho_j^n,\rho_{j+1}^n)-F_\jmh^n(\rho_{j-1}^n,\rho_{j}^n)\right)\quad\text{with}\quad \lambda:=\frac{\Dt}{\Dx},
\end{align}
with a suitable numerical flux $F_\jph^n(a,b)$ which needs to be determined.\\

Therefore, we want to consider a rather general approach to derive the numerical flux function that approximates the flux passing through the cell interface $x_\jph$.
Our main idea is as follows:
We consider the piecewise constant reconstruction of the density $\dr$.
Then, the nonlocal term \eqref{eq:ND:conv} evaluated at the cell boundary $x_\jph$ and time $t^n$ is a fixed value which we call $V_j^n$, see Figure \ref{fig:discret}.
Hence, we can apply an appropriate numerical flux function similar to those known for local conservation laws to approximate the reduced problem with a fixed nonlocal term, i.e. the flux reduces to 
\begin{equation}\label{eq:reducedflux}
F(t^n,x_\jph,\rho)\approx g(\rho)V_j^n.    
\end{equation}

\begin{figure}
    \centering
	\newcommand\checkindexnew[1]{
		\pgfmathsetmacro{\var}{#1}
		\pgfmathparse{ifthenelse(\var==0, "",ifthenelse(\var>0, "+#1","#1"))} \pgfmathresult}%

\begin{tikzpicture}
	
	\pgfmathsetmacro{\celllength}{2.4}
	\pgfmathsetmacro{\cellheight}{1.5}
	\pgfmathsetmacro{\notationposshift}{0.21}

	\def\plotfunc(#1,#2){0.25*(#2)*sin(50*((#1)-2*\celllength))+0.5*(#2)}

	\foreach \j in {-1, ..., 4}{
	   	\pgfmathsetmacro{\currleftbound}{{(\j-0.5)*\celllength}}
	   	\pgfmathsetmacro{\currrightbound}{{(\j+0.5)*\celllength}}
		\pgfmathsetmacro{\currcenter}{{\j*\celllength}}
		\pgfmathsetmacro{\variablefa}{\plotfunc(\currleftbound,\cellheight)}
		\pgfmathsetmacro{\variablefab}{\plotfunc(\currcenter,\cellheight)}
		\pgfmathsetmacro{\variablefb}{\plotfunc(\currrightbound,\cellheight)}
		\pgfmathsetmacro{\curraverage}{
				{((\currrightbound-\currleftbound)/6 * (\variablefa + 4 * \variablefab + \variablefb))
					/ \celllength}}
		
		\if \j3{
		\filldraw[gray!30] (\currleftbound,0) -- (\currleftbound, \curraverage) -- (\currrightbound, \curraverage) -- (\currrightbound, 0) -- cycle;}
		\else
		\if \j2{
		\fill[gray!30] (\currleftbound,0) -- (\currleftbound, \curraverage) -- (\currrightbound, \curraverage) -- (\currrightbound, 0) -- cycle;}	
\else
		\if \j1{
		\fill[gray!30] (\currleftbound,0) -- (\currleftbound, \curraverage) -- (\currrightbound, \curraverage) -- (\currrightbound, 0) -- cycle;}
		\else
		\fi
		\fi		
		
		\fi
		\draw[thick] (\currleftbound, \curraverage) -- (\currrightbound, \curraverage);		

	}
	
	\foreach \j in { -1, ...,4}{
	   \pgfmathsetmacro{\ifpos}{{(\j-0.5)*\celllength}}
	   \draw[thick] (\ifpos , 0) -- (\ifpos , \cellheight);
	   \if \j5
	  {}
	  \else
\def \currindex {\checkindexnew{\j}} 
	   	\node at ({\ifpos+(\celllength*0.5)},\cellheight*0.9) 
	   				{\small $\rho_{j\currindex}$};
	   				\fi
	   				
	}

	\foreach \j in { -1, ..., 4}{
	   	\draw (\j*\celllength, -0.1) -- (\j*\celllength, 0.1);
		\def \currindex {\checkindexnew{\j}} 
	   	\node at (\j*\celllength, -0.4) {$x_{j\currindex}$};
	}
	
	\draw[thick,domain=-1.5*\celllength:4.5*\celllength,smooth,variable=\x] 
			plot ({\x},{\plotfunc(\x, \cellheight)});
		
\draw[->] (3.5*\celllength , -0.7) -- (3.5*\celllength , -0.1);
	   	\node at (3.5*\celllength, -1) {$x_{j+7/2}=x_{j+\frac{1}{2}}+\ndt=x_\jph+3\Dx$};
	   	
\draw[->] (0.5*\celllength , -0.7) -- (0.5*\celllength , -0.1);
	   	\node at (0.5*\celllength, -1) {$x_{j+\frac{1}{2}}$};	
	\draw[thick] (-1.6*\celllength,0) -- (4.6*\celllength, 0);

\draw[->] (0.5*\celllength , 1.3*\cellheight) -- (0.5*\celllength , 1.1*\cellheight);
	   	\node at (0.5*\celllength, 1.5*\cellheight) {$F_{j+\frac{1}{2}}$};
  \draw[thick,decorate, decoration={brace}, yshift=0.5ex] (0.5*\celllength , \cellheight) -- (3.5*\celllength , \cellheight) node[midway,above]{\textcolor{gray}{$V_j^n$}};

 \end{tikzpicture}
    \caption{Space discretization and approximation of the nonlocal term for $\ndt=3\Dx$ with the densities used to calculated $V_j^n$ highlighted in gray.}
    \label{fig:discret}
\end{figure}
Now let us look more closely at the approximation of the nonlocal term and the requirements for a suitable numerical flux function.
\begin{definition}[Approximation of the nonlocal term]\label{def:NLtermdiscrete}
    We approximate the nonlocal term  \eqref{eq:ND:conv} by
\begin{equation}\label{eq:approximatedNLterm}
\begin{aligned}
    &V(t^n,x_\jph)\approx v\left(\int_{x_\jph}^{x_\jph+\ndt} \wt(y-x)\dr(t^n,y)dy \right)= v\left(\sum_{k=0}^{\Ne-1} \gamma_k \rho_{j+k+1}^n\right)=:V_j^n\\
&\text{with}\quad 
    \gamma_k=\int_{k\Dx}^{(k+1)\Dx} \wt (x) dx\quad \text{and}\quad \Ne:=\lfloor \ndt/\Dx\rfloor.
\end{aligned}
\end{equation}
The weights $\gamma_k$ must be computed exactly, e.g. by an appropriate quadrature rule.
\end{definition}
An illustration of the nonlocal term for $\ndt=3\Dx$ can be seen in Figure \ref{fig:discret}.
To ensure the positivity of the nonlocal term it is important that the weights are computed without any numerical error.
For the reduced flux \eqref{eq:reducedflux} $V_j^n$ needs to be considered as a constant and its dependence on $\rho_\jpo^n,\ldots,\rho_{j+\Ne}^n$ will be neglected in the following.
Then a suitable numerical flux function $F^n_\jph(a,b)$ must satisfy the following definition:

\begin{definition}[Numerical flux function]\label{def:numscheme}
The numerical flux function $F^n_\jph$ satisfies the following conditions:
\begin{enumerate}[label=(\roman*)]
\item The flux function can be written as
\[F^n_\jph(a,b)=G(a,b)V_j^n\]
with $G$ only depending on $g,\ \rho_{\min},\ \rho_{\max}$. 
\item Consistency:
\[F^n_\jph(\rho,\rho)=g(\rho)V_j^n,\quad\text{or equivalently },\quad G(\rho,\rho)=g(\rho)\,\,\, \forall \rho\in [\rho_{\min},\rho_{\max}].\]
\item The map $(a,b)\mapsto F^n_\jph(a,b)=\rv{G}(a,b)V_j^n$ from $[\rho_{\min},\rho_{\max}]^2$ to $\R$ is nondecreasing with respect to $a$ and nonincreasing with respect to $b$.
\item $F^n_\jph(a,b)$ satisfies the following sort of weak Lipschitz continuity:
\begin{align}\label{eq:weakLip}
    \abs{F^n_\jph(a,b)-F^n_\jph(b,b)}\leq L_1 \abs{a-b}\quad\text{and}\quad\abs{F^n_\jph(a,b)-F^n_\jph(a,a)}\leq L_2\abs{a-b}
\end{align}
for $(a,b)\in [\rho_{\min},\rho_{\max}]^2$ and every $V_j^n\in [-\norm{v},\norm{v}]$. We will call $L_1>0$ and $L_2>0$ the Lipschitz constants.
\end{enumerate}
\end{definition}

\begin{remark}
Note that the assumptions imply that $F^n_\jph$ and $G$ are bounded and that $G$ satisfies a statement similar to \eqref{eq:weakLip}.
The latter follows directly from $F^n_\jph$ satisfying \eqref{eq:weakLip} for every $V_j^n\in [-\norm{v},\norm{v}]$ and the bound can be obtained by
\[F^n_\jph(a,b)=F^n_\jph(a,b)-F^n_\jph(b,b)+F^n_\jph(b,b)\leq g(b)V_j^n+L_1\abs{a-b}\leq \norm{g}\norm{v}+L_1(\rho_{\max}-\rho_{\min}).\]
\end{remark}

The assumptions on the flux function after approximating the nonlocal term are very similar to those for a monotone flux scheme of local conservation laws, cf. \cite[Definition 5.6]{eymard2000finitevolume}.
The only additional assumption is that we assume the numerical flux to be decomposable as in (i).
Since the flux function of \eqref{eq:ND} is decomposable into a nonlinear term and nonlocal term, i.e. $F(t,x,\rho)=g(\rho)V(t,x)$, this is a rather natural assumption.
The other assumptions (ii)-(iv) are also quite common.
Assumption (ii) is necessary for the consistency with the flux of the modeling equation, while (iii) gives our scheme a monotone like behavior.
Although the complete scheme is not monotone in every argument, as here the dependence of the nonlocal term on $\rho_\jpo,\dots,\rho_{j+\Ne}$ is not considered.
Finally, (iv) is a slightly weaker form of the usual assumed local Lipschitz continuity.
One could also assume that the flux function is locally Lipschitz continuous from $\R^2$ to $\R$ as typical for monotone flux schemes,  cf. \cite[Definition 5.6]{eymard2000finitevolume}, but during the convergence analysis in Section \ref{sec:result} it will turn out that the weak Lipschitz continuity \eqref{eq:weakLip} is sufficient to guarantee convergence.

\begin{remark}[Generalization of the flux]
The above strategy can be applied to other classes of nonlocal conservation laws that have a similar structure.
For simplicity, we will perform the proofs only for the equation \eqref{eq:ND}.
We will discuss this further in the numerical examples in Section \ref{sec:numsedimentation}.
Note that the Definition \ref{def:numscheme} also covers the case of $V_j^n<0$, although this does not occur in the model \eqref{eq:ND}.
\end{remark}

Let us present some examples for the equation \eqref{eq:ND} which are inspired by classical monotone schemes.
In Section \ref{sec:result}, we will see that $V_j^n\geq 0$ holds due to Definition \ref{def:NLtermdiscrete} and the Assumptions \ref{ass:model}.
Hence, we will already use $V_j^n\geq 0$ to simplify the schemes in the following.
Whenever we use this property, we denote it with $(*)$.
\begin{itemize}
\item\textbf{(local) Lax-Friedrichs-type scheme:} For a diffusion parameter $\alpha>0$ and inspired by the classical Lax-Friedrichs scheme \cite{lax1954scheme}, we have
\begin{align}\label{eq:LxFmod}
    F^n_\jph(\rho^n_j,\rho^n_{j+1})=\frac{V_j^n}{2}\left( g(\rho^n_j)+g(\rho^n_{j+1})+\alpha\left(\rho^n_j-\rho^n_{j+1}\right)\right).
\end{align}
To satisfy the Definition \ref{def:numscheme} $\alpha\geq \norm{g'}$ needs to hold.\\
Note that this scheme differs to the usual choice of the nonlocal Lax-Friedrichs-type scheme as used e.g. in \cite{chiarello2018global}.
Here, we consider $g(\rho^n_{j+1})V_j^n$ instead of $g(\rho^n_{j+1})V_{j+1}^n$.
\item\textbf{Godunov-type scheme:} Inspired by the Godunov scheme introduced in \cite{godunov1959} we deduce the following flux function
\begin{align}\notag
    F^n_\jph(\rho_j^n,\rho^n_{j+1})=&\begin{cases}
    \min_{[\rho^n_j,\,\rho^n_\jpo]}g(\rho)V_j,&\rho^n_j\leq\rho^n_\jpo,\\
    \max_{[\rho^n_\jpo,\,\rho^n_j]}g(\rho)V_j,&\rho^n_j>\rho^n_\jpo,
    \end{cases}\\
    \label{eq:Godunov}
    \stackrel{(*)}{=}&\,V_j^n\begin{cases}
    \min_{[\rho^n_j,\,\rho^n_\jpo]}g(\rho),&\rho^n_j\leq\rho^n_\jpo,\\
    \max_{[\rho^n_\jpo,\,\rho^n_j]}g(\rho),&\rho^n_j>\rho^n_\jpo.
	\end{cases}
\end{align}
\item\textbf{Engquist-Osher-type scheme:} Further, we consider an Engquist-Osher-type scheme following the ideas of \cite{engquist1981one}:
\begin{align}\notag
    F^n_\jph(\rho^n_j,\rho^n_{j+1})=&\frac{1}{2}\left( g(\rho_j^n)V_j^n+g(\rho_{j+1}^n)V_j^n-\int_{\rho_j^n}^{\rho_{j+1}^n}\abs{g'(\rho)V_j^n}d\rho\right)\\
    \label{eq:EOflux}
    \stackrel{(*)}{=}&\,\frac{V_j^n}{2}\left( g(\rho_j^n)+g(\rho_{j+1}^n)-\int_{\rho_j^n}^{\rho_{j+1}^n}\abs{g'(\rho)}d\rho\right).
\end{align}
Note that in case of a strictly concave function $g$, i.e. $g''(\rho)>0$ and $g'(c)=0$ for $c\in [\rho_{\min},\rho_{\max}]$, the flux simplifies to
\[F^n_\jph(\rho_j^n,\rho_{j+1}^n) =\rv{V_j^n}\left(g(\min\{\rho_j^n,c\})+g(\max\{\rho_{j+1}^n,c\})-g(c)\right),
    \]
compare \cite[p. \rv{229}]{GodlewskiRaviart}.
\end{itemize}
All these schemes are inspired by already well studied schemes for local conservation laws, in particular in the local case they all belong to the class of monotone schemes, see \cite{eymard2000finitevolume}.
Hence, they also satisfy Definition \ref{def:numscheme}.
Note that the Godunov-type scheme has already been considered for similar problems in \cite{chiarello2019multiclass,colombo2019role, friedrich2018godunov}.
In contrast, the Lax-Friedrichs-type scheme is slightly different from the usual ones used in the literature, e.g., \cite{BlandinGoatin2016,chiarello2018global, huang2023asymptotically}.
To the best of our knowledge this is the first time that an Engquist-Osher-type scheme is proposed for nonlocal conservation laws.
Furthermore, an Upwind-type scheme does not in general satisfy Definition \ref{def:numscheme} for a nonlinear $g$, since it is not monotone.
As in the local case it may converge to the wrong solution.
Nevertheless, if $g$ is linear, e.g. $g(\rho)=\rho$, also an Upwind-type scheme would converge to the correct solution.
In particular, in this case the scheme coincides with the Godunov- and Engquist-Osher-type schemes.

\begin{remark}[Comparison to other general approaches in the literature]\label{rem:comparison}
As aforementioned typically schemes tailored to each application indiviually are studied.
In a recent work \cite{huang2023asymptotically}, the authors consider a more general approach for a class of numerical schemes.
Nevertheless, they restrict themselves to a specific subcase of \eqref{eq:ND}, i.e. $g(\rho)=\rho$ and $v(\rho)=1-\rho$.
The goal in \cite{huang2023asymptotically} is to derive asymptotically compatible numerical schemes and hence they assume further restrictions on the initial data (strictly positive without negative jumps) and an upper bound on the nonlocal range $\ndt$.
Besides considering a more general flux function, our approach to derive a numerical scheme also differs since we first approximate the nonlocal term, leaving us with only one nonlocal term in the flux, while in \cite{huang2023asymptotically} there may be an additional nonlocal term in the numerical flux function.
Hence, our scheme might be a special case of their scheme.
Nevertheless, some conditions for convergence are different, e.g. we assume Lipschitz type conditions on the numerical flux function in contrast to conditions on the partial derivatives.
Some conditions are also similar, such as the consistency or the first order derivatives with respect to the local variables.
For more details on the numerical scheme we refer to \cite[Assumption 4]{huang2023asymptotically}.
\end{remark}

\begin{remark}[Higher order extensions]
The scheme \eqref{eq:scheme} together with a numerical flux function satisfying Definition \ref{def:numscheme} is at most a first order approximation of the modeling equation \eqref{eq:ND}, see Section \ref{sec:numerics} for numerical convergence rates.
Nevertheless, using a semi-discrete approach and a more accurate approximation of the space, e.g. by a CWENO reconstruction procedure as in \cite{friedrich2019maximum},  together with an appropriate time stepping scheme and an accurate approximation of the nonlocal term, allows a higher order approximation of the solution.
One can simply follow the same approach as in \cite{friedrich2019maximum} with a numerical flux function that satisfies Definition \ref{def:numscheme}.
\end{remark}

\section{Main result}\label{sec:result}
In this section we prove that the assumptions on the numerical fluxes in Definition \ref{def:numscheme} together with the appropriate approximation of the nonlocal term in Definition \ref{def:NLtermdiscrete} and an appropriate Courant-Friedrichs-Levy (CFL) condition are sufficient to guarantee the convergence of the numerical scheme to the physically correct entropy solution.
Thereby, we consider the following CFL condition:
\begin{equation}\label{eq:CFL}
\lambda:=\frac{\Dt}{\Dx}\leq \frac{1}{\norm{G} \norm{v'} \gamma_0+L_1+L_2}.
\end{equation}
Now we are ready to present our main result:
\begin{theorem}\label{thm:mainresult}
Let the Assumptions \ref{ass:model} hold.  Then a numerical scheme \eqref{eq:scheme} with a numerical flux function fulfilling Definition \ref{def:numscheme} and the nonlocal term being approximated as in Definition \ref{def:NLtermdiscrete} converges under the CFL condition \eqref{eq:CFL} for $(\Dt,\Dx)\to 0$ to the unique weak entropy solution of \eqref{eq:ND} in the sense of Definition \ref{def:weakentropy}.\\
Moreover the sequence of approximate solutions fulfills the maximum principle for a given initial datum $\rho_j^0,\ j\in \Z$
\[\rv{\inf}_{j\in\Z}\, \rho_j^0\leq \rho_j^n\leq \rv{\sup}_{j\in\Z}\, \rho_j^0,\ j\in\Z,\ n\in\N.\]
\end{theorem}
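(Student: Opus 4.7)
The plan is to follow the classical Lax--Wendroff type strategy adapted to the nonlocal setting: establish a priori bounds ($L^\infty$, spatial $\BV$, and time continuity) on the approximate sequence $\dr$, extract a convergent subsequence via Helly's compactness theorem, verify a discrete entropy inequality, and pass to the limit to identify the limit with the weak entropy solution; uniqueness is then supplied by the result recalled from \cite{chiarello2018global} so that the whole net converges.

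First, I would rewrite the scheme in the form $\rho_j^{\npo}=H(\rho_{\jmo}^n,\rho_j^n,\rho_\jpo^n)$, absorbing the nonlocal data $V_{\jmo}^n$ and $V_j^n$ into the map $H$. Monotonicity of $G$ (Definition~\ref{def:numscheme}(iii)) immediately gives $\partial_1 H \geq 0$ and $\partial_3 H \geq 0$, while the weak Lipschitz property~\eqref{eq:weakLip} together with the CFL condition~\eqref{eq:CFL} forces $\partial_2 H \in [0,1]$. Thus $H$ is nondecreasing in every variable and, by consistency, leaves constant states invariant, so a straightforward induction in $n$ yields the maximum principle. Since the range $[\rho_{\min},\rho_{\max}]\subset\R^+$ and $v$ is nonnegative on $I$, we obtain $V_j^n\geq 0$ a posteriori, justifying the simplifications marked $(*)$ in Section~\ref{sec:numclass}.

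The main obstacle is the spatial $\BV$ estimate. When one forms the discrete derivative $\rho_\jpo^{\npo}-\rho_j^{\npo}$, monotonicity of $H$ only controls the purely local contribution; there remains an additional nonlocal error term driven by $V_j^n-V_{\jmo}^n$, itself a spatial derivative of the convolution. I would control this using the regularity of $\wt$ (so that $\sum_{k}\abs{\gamma_{k+1}-\gamma_k}$ is bounded by $\Dx\,\norm{\omega_\eta'}_{L^1}$) together with the Lipschitz continuity of $v$; after a careful summation by parts and use of the $L^\infty$ bound on $\dr$ one obtains an estimate of the form $\TV(\rho^{\npo})\leq (1+C\Dt)\TV(\rho^n)$, from which a discrete Gronwall inequality produces a uniform BV bound on every finite time horizon. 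Time continuity then follows directly from the scheme using the weak Lipschitz property once more: $\norm{\rho^{\npo}-\rho^n}_{L^1(\R)}\leq C\Dt\,\TV(\rho^n)$. With these three bounds in hand, Helly's theorem delivers a subsequence $\dr\to\rho$ in $\Lloc{1}$ together with $\rho\in\BV$.

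To recover the entropy inequality, I would invoke the Crandall--Majda trick: monotonicity of $H$ together with consistency implies $H(\rho_{\jmo}^n\vee k,\rho_j^n\vee k,\rho_\jpo^n\vee k)\geq \rho_j^{\npo}\vee k$ for every $k\in\R$, and subtracting the analogous inequality for $\wedge k$ yields a discrete Kruzkov inequality for $\abs{\rho_j^n-k}$. Multiplying by a nonnegative test function $\phi\in C_c^1(\R^2)$, summing over $(j,n)$, performing discrete integration by parts, and exploiting the $\Lloc{1}$ convergence of $\dr$, continuity of $g$ and $v$, and convergence of the quadrature weights $\gamma_k$ to the kernel, every term passes to its counterpart in~\eqref{eq:entropy}; the source $-\sign(\rho-k)kV_x\phi$ emerges from the discrete difference $V_j^n-V_{\jmo}^n$ after the summation by parts. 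The limit $\rho$ is thus a weak entropy solution in the sense of Definition~\ref{def:weakentropy}, and by its uniqueness the entire family converges; the maximum principle for the limit is inherited directly from the discrete one.
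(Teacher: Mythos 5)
Your overall architecture (a priori bounds, Helly compactness, Crandall--Majda discrete entropy inequality, Lax--Wendroff limit, uniqueness from \cite{chiarello2018global}) is the same as the paper's. However, your maximum-principle step contains a genuine gap. You claim that monotonicity of $H(u,w,z)=w-\lambda\bigl(F^n_{\jph}(w,z)-F^n_{\jmh}(u,w)\bigr)$ in each argument together with ``consistency, leaves constant states invariant'' yields the bound by induction. But $H$ does \emph{not} leave constant states invariant here: $H(c,c,c)=c-\lambda\, g(c)\bigl(V_j^n-V_{\jmo}^n\bigr)\neq c$ in general, because the frozen nonlocal velocities $V_{\jmo}^n$ and $V_j^n$ differ. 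Consequently the standard monotone-scheme argument gives only $\rho_j^{n+1}\leq H(\rho_M,\rho_M,\rho_M)=\rho_M-\lambda g(\rho_M)(V_j^n-V_{\jmo}^n)$, and the correction term has the wrong sign precisely when $V_{\jmo}^n>V_j^n$. This is exactly the obstruction the paper flags: the full scheme is not monotone in $\rho^n_{j+2},\dots,\rho^n_{j+\Ne}$, so the maximum principle cannot be reduced to monotonicity of $H$.

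The missing ingredient is the structural estimate
\[
V_{\jmo}^{n}-V_{j}^{n}\;\leq\;\norm{v'}\,\gamma_0\,(\rho_M-\rho_j^n),
\]
which uses $v'\leq 0$ together with the monotone decay of the quadrature weights $\gamma_k\leq\gamma_{k-1}$ (i.e.\ $\wt'\leq 0$): the point is that the dangerous difference of velocities is controlled not merely by a bounded quantity but by a multiple of $\rho_M-\rho_j^n$ itself, so that it can be absorbed by the CFL condition \eqref{eq:CFL} alongside the Lipschitz constants $L_1,L_2$. Without an estimate of this form your induction does not close, and note also that your a posteriori derivation of $V_j^n\geq 0$ then rests on a maximum principle you have not actually established. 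The remainder of your sketch (BV via incremental coefficients and summation by parts with a Gronwall argument, the $L^1$-in-time continuity, the entropy inequality for the genuinely monotone map $H$ with the extra source term coming from $V_j^n-V_{\jmo}^n$, and the passage to the limit) matches the paper's proof in substance.
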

The proof consists of proving several properties fulfilled by all schemes \eqref{eq:scheme} satisfying the Definitions \ref{def:NLtermdiscrete} and \ref{def:numscheme}.

\subsection{Maximum Principle}
We start by proving a discrete maximum principle.
Due to the involved nonlocal term, the numerical fluxes of Definition \ref{def:numscheme} are not monotone in every argument, in particular for $\rho^n_{j+k}$, $k=2,\dots,\Ne-1$, see also \cite{BlandinGoatin2016,huang2023asymptotically}.
Hence, we will have to rely on a different approach to show the maximum principle, similar to \cite{friedrich2020onetoone,friedrich2021network,friedrich2018godunov}. 
\begin{proposition}\label{prop:Maximumprinciple}
Under the Assumptions \ref{ass:model} and the CFL condition \eqref{eq:CFL}, for a given initial datum $\rho_j^0,\ j\in \Z$ with $\rho_m=\rv{\inf}_{j\in\Z}\, \rho_j^0$ and $\rho_M=\rv{\sup}_{j\in\Z}\, \rho_j^0$, the scheme \eqref{eq:scheme} satisfying the Definitions \ref{def:NLtermdiscrete} and \ref{def:numscheme} fulfills
\[\rho_m\leq \rho_j^n\leq \rho_M,\ j\in\Z,\ n\in\N.\]
\end{proposition}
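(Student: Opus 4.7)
The plan is to rewrite one step of the scheme as a convex combination of $\rho_{j-1}^n,\rho_j^n,\rho_\jpo^n,\ldots,\rho_{j+\Ne}^n$ and then proceed by induction on $n$. The main obstacle is that \eqref{eq:scheme} is not monotone in the downstream values $\rho_{j+k}^n$ for $k\geq 2$, since those enter only through the nonlocal average $V_j^n$; consequently the classical three-point maximum-principle argument fails and the monotonicity (iii) in Definition \ref{def:numscheme} is by itself insufficient.

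The first key step is to exploit consistency (ii) by inserting $\pm F^n_\jph(\rho_j^n,\rho_j^n)$ and $\pm F^n_\jmh(\rho_j^n,\rho_j^n)$, producing the decomposition
\begin{align*}
\rho_j^\npo=\rho_j^n+\lambda\bigl[F^n_\jph(\rho_j^n,\rho_j^n)-F^n_\jph(\rho_j^n,\rho_\jpo^n)\bigr]+\lambda\bigl[F^n_\jmh(\rho_{j-1}^n,\rho_j^n)-F^n_\jmh(\rho_j^n,\rho_j^n)\bigr]-\lambda g(\rho_j^n)\bigl(V_j^n-V_{j-1}^n\bigr).
\end{align*}
Writing the first two brackets as $C(\rho_\jpo^n-\rho_j^n)$ and $-D(\rho_j^n-\rho_{j-1}^n)$ with incremental coefficients
\begin{align*}
C=\frac{F^n_\jph(\rho_j^n,\rho_j^n)-F^n_\jph(\rho_j^n,\rho_\jpo^n)}{\rho_\jpo^n-\rho_j^n},\qquad D=\frac{F^n_\jmh(\rho_j^n,\rho_j^n)-F^n_\jmh(\rho_{j-1}^n,\rho_j^n)}{\rho_j^n-\rho_{j-1}^n},
\end{align*}
monotonicity (iii) forces $C,D\geq 0$ regardless of the sign of the denominator, while the weak Lipschitz property (iv) gives $C\leq L_2$ and $D\leq L_1$. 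This cleanly isolates a local three-point part $(1-\lambda C-\lambda D)\rho_j^n+\lambda C\rho_\jpo^n+\lambda D\rho_{j-1}^n$.

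The second key step handles the residual nonlocal contribution. Applying the mean value theorem to $v$ and using Definition \ref{def:NLtermdiscrete} yields
\begin{align*}
V_j^n-V_{j-1}^n=v'(\xi)\Bigl[\gamma_{\Ne-1}\rho_{j+\Ne}^n+\sum_{k=1}^{\Ne-1}(\gamma_{k-1}-\gamma_k)\rho_{j+k}^n-\gamma_0\rho_j^n\Bigr],
\end{align*}
and the three sign ingredients $v'\leq 0$, $g\geq 0$, and $\gamma_{k-1}-\gamma_k\geq 0$ (the last coming from the nonincreasing kernel $\omega_\ndt$ in Assumption \ref{ass:model}) produce nonnegative weights on each $\rho_{j+k}^n$ for $k=1,\dots,\Ne$, together with a negative contribution $-\lambda\gamma_0|g(\rho_j^n)v'(\xi)|$ to the coefficient of $\rho_j^n$. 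Thus the telescoped, sign-friendly structure of the kernel is what rescues monotonicity in the downstream indices.

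Combining both steps, $\rho_j^\npo$ is a linear combination of $\rho_{j-1}^n,\rho_j^n,\rho_\jpo^n,\ldots,\rho_{j+\Ne}^n$ whose off-diagonal weights are manifestly nonnegative, whose diagonal weight $1-\lambda(C+D+\gamma_0|g(\rho_j^n)v'(\xi)|)$ is nonnegative under the CFL condition \eqref{eq:CFL} (using $C\leq L_2$, $D\leq L_1$, and $|g(\rho_j^n)v'(\xi)|\leq\norm{G}\norm{v'}$ since $g(\rho)=G(\rho,\rho)$), and whose total mass collapses to $1$ via the telescoping identity $\sum_{k=1}^{\Ne-1}(\gamma_{k-1}-\gamma_k)=\gamma_0-\gamma_{\Ne-1}$. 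Hence $\rho_j^\npo\in[\rho_m,\rho_M]$ whenever every entry of the stencil is, and the proposition follows by induction on $n$, the base case $n=0$ being immediate from the cell-average definition of $\rho_j^0$ and the well-posedness of $V_j^n$ following from $v\geq 0$ on $[\rho_{\min},\rho_{\max}]$.
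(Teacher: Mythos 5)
Your proof is correct and rests on exactly the same ingredients as the paper's: induction, the monotonicity (iii) and weak Lipschitz bounds (iv) to control the incremental coefficients $C\leq L_2$, $D\leq L_1$, the mean value theorem combined with $v'\leq 0$, $g\geq 0$ and the telescoping of the nonincreasing weights $\gamma_k$ to tame the nonlocal part, and the CFL condition \eqref{eq:CFL} to keep the diagonal coefficient nonnegative. The only difference is presentational: you package the update as an explicit convex combination of the stencil $\rho_{j-1}^n,\dots,\rho_{j+\Ne}^n$ (yielding both bounds at once), whereas the paper performs the equivalent one-sided comparisons $F^n_\jmh-F^n_\jph\leq(\norm{G}\norm{v'}\gamma_0+L_1+L_2)(\rho_M-\rho_j^n)$ and its analogue for $\rho_m$.
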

\begin{proof}
We prove the claim by induction. For $n=0$ the claim is obvious, so we suppose
\[\rho_m\leq \rho_j^n\leq \rho_M,\ j\in\Z\]
holds for a fixed $n\in\N$.

Next, we consider
\begin{align}
    V_{\jmo}^{n}-V_{j}^{n}&= 
    v\left(\sum_{k=0}^{\Ne-1} \gamma_k \rho^n_{j+k}\right)-v\left(\sum_{k=0}^{\Ne-1} \gamma_k \rho^n_{j+k+1}\right)\nonumber\\
    &=v'(\xi_j)\left(\sum_{k=1}^{\Ne-1}(\gamma_k-\gamma_{k-1})\rho_\jpk^n-\gamma_{{\Ne-1}} \rho_{j+\Ne}^n+\gamma_0 \rho_j^n\right),\nonumber\\
    \intertext{\rv{where $\xi_j$ comes from the mean value theorem. Since} $\gamma_k\leq \gamma_{k-1}$ holds, we get}
    \rv{V_{\jmo}^{n}-V_{j}^{n}}&\leq v'(\xi_j)\left(\sum_{k=1}^{\Ne-1}(\gamma_k-\gamma_{k-1})\rho_M-\gamma_{{\Ne-1}}\rho_{M}+\gamma_0 \rho_j^n\right)\nonumber\\
    &\leq \norm{v'} \gamma_0 (\rho_M-\rho_j^n)\label{eq:boundV}.
\end{align}
Then, by the assumptions (iii) and (iv) on the numerical flux in Definition \ref{def:numscheme} and \eqref{eq:boundV} we have
\begin{align*}
F^n_\jmh-F^n_\jph\leq& G(\rho_M,\rho_j^n)V^n_{j-1}-G(\rho_j^n,\rho_M)V^n_j\\
=&  G(\rho_M,\rho_j^n)(V^n_{j-1}-V^n_j)-\left(G(\rho_j^n,\rho_M)-G(\rho_M,\rho_j^n)\right)V_j^n\\
\leq& \norm{G} \norm{v'} \gamma_0 (\rho_M-\rho_j^n) -F^n_\jph(\rho_j^n,\rho_M)\pm F^n_\jph(\rho_M,\rho_M)+F^n_\jph(\rho_M,\rho_j^n)\\
\leq& (\norm{G} \norm{v'} \gamma_0+L_1+L_2) (\rho_M-\rho_j^n).
\end{align*}
Hence, under the CFL condition \eqref{eq:CFL} we obtain
\[\rho_j^{n+1}\leq \rho_M.\]
Analogously to \eqref{eq:boundV}, we obtain
\begin{align*}
    V_{\jmo}^{n}-V_{j}^{n}&\geq \norm{v'} \gamma_0 (\rho_m-\rho_j^n)
\end{align*}
and 
\begin{align*}
F^n_\jmh-F^n_\jph\geq & (\norm{G} \norm{v'} \gamma_0+L_1+L_2) (\rho_m-\rho_j^n)
\end{align*}
to obtain under the CFL condition \eqref{eq:CFL} 
\[\rho_j^{n+1}\geq \rho_m.\]
\end{proof}
An immediate consequence of the maximum principle and in particular the positivity of the approximate solutions is that the discrete $L^1$-norm is conserved.
\begin{corollary}\label{corollary:L1}
    Under the Assumptions \ref{ass:model} and the CFL condition \eqref{eq:CFL} the approximate solutions computed by the numerical scheme \eqref{eq:scheme} satisfying the Definitions \ref{def:NLtermdiscrete} and \ref{def:numscheme} preserve the $L^1$ norm, i.e.
    \[\Delta x \sum_{j\in\Z}\abs{\rho_j^n}=\norm{\rho_0}_{L^1(\R)}.\]
\end{corollary}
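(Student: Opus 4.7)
The plan is to reduce the claim to a standard telescoping-sum argument after removing the absolute value via the maximum principle. First, since $\rho_0$ takes values in $I\subseteq\R^+$ by Assumption \ref{ass:model}(1), Proposition \ref{prop:Maximumprinciple} yields $\rho_j^n\geq\rho_{\min}\geq 0$ for all $j\in\Z$ and $n\in\N$. Hence $\abs{\rho_j^n}=\rho_j^n$, and it suffices to prove $\Dx\sum_{j\in\Z}\rho_j^n=\norm{\rho_0}_{L^1(\R)}$.

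Next, I would proceed by induction on $n$. The base case $n=0$ follows directly from the cell-average definition of $\rho_j^0$ together with the positivity of $\rho_0$:
\[\Dx\sum_{j\in\Z}\rho_j^0=\sum_{j\in\Z}\int_{x_\jmh}^{x_\jph}\rho_0(x)\,dx=\norm{\rho_0}_{L^1(\R)}.\]
For the induction step, multiplying the conservative form \eqref{eq:scheme} by $\Dx$ and summing over $j\in\Z$ gives
\[\Dx\sum_{j\in\Z}\rho_j^{n+1}=\Dx\sum_{j\in\Z}\rho_j^n-\Dt\sum_{j\in\Z}\bigl(F_\jph^n-F_\jmh^n\bigr),\]
and the conclusion will follow as soon as the flux sum is seen to vanish by telescoping.

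The only point needing a moment's thought is handling the infinite summation. I would truncate to $\abs{j}\leq N$, which leaves only the boundary difference $F_{N+\half}^n-F_{-N-\half}^n$, and then pass to the limit $N\to\infty$. Since $\rho^n\in\ell^1(\Z)$, inherited from $\rho_0\in L^1(\R)$ and preserved inductively, the values $\rho_j^n$ decay at infinity; combined with the continuity of $G$ and $v$ from Definition \ref{def:numscheme}, the fact that each $V_j^n$ depends on only finitely many cells $\rho_{j+1}^n,\ldots,\rho_{j+\Ne}^n$, and the standard $L^1$-compatibility condition $g(0)=0$ implicit in an $L^1$ framework, the boundary fluxes tend to zero. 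I do not foresee any real obstacle beyond this tail bookkeeping; once the maximum principle is in hand the argument is essentially mechanical.
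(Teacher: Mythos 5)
Your argument is correct and matches the paper's (implicit) intent: the paper offers no proof, presenting the corollary as an immediate consequence of the maximum principle, and the positivity-plus-telescoping argument you give is exactly the standard one it has in mind. The only quibble is that $g(0)=0$ is not among the paper's hypotheses (Assumption \ref{ass:model} only requires $g\in C^1(I;\mathbb{R}^+)$); but you do not actually need it, since after truncating to $\abs{j}\le N$ the two boundary fluxes $F^n_{N+1/2}$ and $F^n_{-N-1/2}$ both converge to the same value $g(0)v(0)$ as $N\to\infty$ (because $\rho_j^n\to 0$ in both directions for an $\ell^1$ sequence) and therefore cancel in the telescoped sum regardless.
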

\begin{remark}
    Proposition \ref{prop:Maximumprinciple} together with an approximation of the nonlocal term as in Definition \ref{def:NLtermdiscrete} guarantees the following bounds on the nonlocal term:
\[0\leq v(\rho_M)\leq V_j^n\leq v(\rho_m)\leq v(0).\]
\end{remark}

\subsection{Bounded variation estimates}
Next, we want to derive bounded variation (BV) estimates in space and time on the approximate solutions.
Therefore, we recall the definition of the piecewise constant function
\begin{align}\label{eq:rhodelta}
    \dr(t,x):=\sum_{n\in\N} \sum_{j\in\Z}\rho_j^n\,\caratt{[t^n,\,t^{n+1})\times[x_\jmh,\,x_\jph)}(t,x).
\end{align}
As for the analytical solution obtained in \cite{chiarello2018global} we only recover estimates which increase exponentially in time.
\begin{proposition}[BV estimate in space] Let the Assumptions \ref{ass:model} and the CFL condition \eqref{eq:CFL} hold. Let $\dr$ be defined as in \eqref{eq:rhodelta} with a scheme \eqref{eq:scheme} satisfying the Definitions \ref{def:NLtermdiscrete} and \ref{def:numscheme}, then we obtain for any finite time $t^n>0$ the following bound on the total variation in space
\begin{align*}
\rv{\TV(\rho^{\Delta}(t^n,\cdot))=\sum_{j\in\Z}\abs{\rho_\jpo^n-\rho_j^n}}\leq \exp\left(t^n \wt(0) \left(\norm{v'}\left(2\norm{g}+\norm{g'}\,\norm{\rho}\right)+2 \norm{v''}\,\norm{g}\,\norm{\rho}\right)\right) \TV(\rho_0).
\end{align*}
\end{proposition}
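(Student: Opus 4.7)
The plan is to derive a one-step recursion of the form $\TV(\dr(t^{n+1},\cdot))\le(1+K\Dt)\,\TV(\dr(t^n,\cdot))$ with $K$ equal to the constant appearing in the exponent of the claim, and then iterate, using $(1+K\Dt)^n\le e^{K t^n}$, to obtain the exponential bound. First I would subtract the scheme \eqref{eq:scheme} at index $j$ from the one at $j+1$ to obtain
\[\rho_{j+1}^{n+1}-\rho_j^{n+1}=(\rho_{j+1}^n-\rho_j^n)-\lambda\bigl(F^n_{j+3/2}-2F^n_{j+1/2}+F^n_{j-1/2}\bigr).\]
Writing $H^n_{j+1}:=G(\rho_j^n,\rho_{j+1}^n)$ so that $F^n_{j+1/2}=H^n_{j+1}V_j^n$, I would use the discrete analogue of the product rule for second differences to split
\[F^n_{j+3/2}-2F^n_{j+1/2}+F^n_{j-1/2}=V_j^n(H^n_{j+2}-2H^n_{j+1}+H^n_j)+H^n_{j+1}(V^n_{j+1}-2V^n_j+V^n_{j-1})+\text{cross terms},\]
where the cross terms have the form $\Delta H\cdot\Delta V$, separating a ``local'' piece (with $V$ frozen) from two ``nonlocal'' ones.

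For the local piece I would reproduce the classical TVD bookkeeping for monotone schemes. Properties (iii)--(iv) of \cref{def:numscheme}, together with the consistency $G(\rho,\rho)=g(\rho)$, let me write
\[H^n_{j+1}-H^n_j=\alpha_j(\rho_j^n-\rho_{j-1}^n)-\beta_j(\rho_{j+1}^n-\rho_j^n),\qquad 0\le\alpha_j\le L_1,\ 0\le\beta_j\le L_2,\]
with nonnegative signs thanks to monotonicity of $G$. The CFL condition \eqref{eq:CFL} is exactly designed so that, after taking absolute values and summing over $j$, the contribution of $V_j^n(H^n_{j+2}-2H^n_{j+1}+H^n_j)$ telescopes to a quantity bounded by $\TV(\dr(t^n,\cdot))$.

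For the nonlocal contributions I would handle $V^n_{j+1}-V^n_j$ by the mean value theorem applied to $v$ followed by the summation-by-parts identity already exploited in the proof of \cref{prop:Maximumprinciple},
\[\sum_{k=0}^{\Ne-1}\gamma_k(\rho^n_{j+k+2}-\rho^n_{j+k+1})=\gamma_{\Ne-1}\rho^n_{j+\Ne+1}-\gamma_0\rho^n_{j+1}+\sum_{k=1}^{\Ne-1}(\gamma_{k-1}-\gamma_k)\rho^n_{j+k+1}.\]
Because the kernel is nonincreasing and $\gamma_0\le\Dx\,\wt(0)$, this provides two complementary bounds: an $L^\infty$-type estimate $|V^n_{j+1}-V^n_j|\le 2\gamma_0\|v'\|\,\|\rho\|$ and an $\ell^1$-type estimate $\sum_j|V^n_{j+1}-V^n_j|\le\|v'\|\,\TV(\dr(t^n,\cdot))$. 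For $V^n_{j+1}-2V^n_j+V^n_{j-1}$ I would expand $v$ to second order around $A_j^n:=\sum_k\gamma_k\rho^n_{j+k+1}$: the first-order piece $v'(A_j^n)(A^n_{j+1}-2A^n_j+A^n_{j-1})$ is treated by another summation by parts just as above, while the quadratic remainder is controlled via $\sum_j(A^n_{j+1}-A^n_j)^2\le 2\|\rho\|\,\TV(\dr(t^n,\cdot))$, obtained by combining the $\ell^1$ and $L^\infty$ bounds on $A^n_{j+1}-A^n_j$.

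Finally I would sum the absolute value of the recurrence over $j\in\Z$, interchange the finite convolution sums by Fubini, and use $\lambda\gamma_0\le\Dt\,\wt(0)$; every nonlocal contribution then picks up exactly one factor of $\Dt\,\wt(0)$ multiplied by constants in $\|v'\|$, $\|v''\|$, $\|g\|$, $\|g'\|$ and $\|\rho\|$, and combining with the stable local contribution yields the desired recurrence with $K=\wt(0)\bigl(\|v'\|(2\|g\|+\|g'\|\,\|\rho\|)+2\|v''\|\,\|g\|\,\|\rho\|\bigr)$. The main obstacle will be the nonlocal step: each cross term and each Taylor remainder has to be split between the $L^\infty$-type bound (which releases the crucial factor $\gamma_0\le\Dx\,\wt(0)$ and so cancels the $1/\Dx$ in $\lambda$) and the $\ell^1$-type bound (which converts a convolution sum into $\TV$ via Fubini) in exactly the right way, so that after summation over $j$ precisely one factor of $\wt(0)$ per nonlocal contribution survives and the dependence on the nonlocal range $\Ne$ drops out of the final constant.
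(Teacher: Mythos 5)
Your overall strategy --- deriving a one-step recursion $\TV(\dr(t^{n+1},\cdot))\le(1+K\Dt)\TV(\dr(t^{n},\cdot))$ from an incremental (TVD) treatment of the local part plus mean-value/summation-by-parts treatment of the nonlocal differences, and then iterating --- is exactly the paper's. The genuine problem sits in your very first decomposition. Splitting $F^n_{j+3/2}-2F^n_{j+1/2}+F^n_{j-1/2}$ by a symmetric discrete product rule leaves cross terms of the form $(H^n_{j+2}-H^n_{j+1})(V^n_{j+1}-V^n_j)$, and the only available control on $H^n_{j+2}-H^n_{j+1}=G(\rho^n_{j+1},\rho^n_{j+2})-G(\rho^n_j,\rho^n_{j+1})$ is the weak Lipschitz bound $L_2\abs{\rho^n_{j+2}-\rho^n_{j+1}}+L_1\abs{\rho^n_{j+1}-\rho^n_j}$. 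After summing over $j$ these cross terms contribute a factor of order $\wt(0)\,\norm{v'}\,\norm{\rho}\,(L_1+L_2)$ to the exponent, and likewise your term $H^n_{j+1}(V^n_{j+1}-2V^n_j+V^n_{j-1})$ carries $\norm{G}$ rather than $\norm{g}$. Neither $L_1$, $L_2$ nor $\norm{G}$ appears in the constant you are asked to prove, so your route yields a uniform exponential BV bound, but with a strictly larger constant than the one in the statement; for a Lax--Friedrichs-type flux with a large diffusion parameter the discrepancy is not cosmetic.

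The fix --- and it is what the paper does --- is to pivot the incremental decomposition on the consistency relation $G(\rho,\rho)=g(\rho)$: write
\begin{align*}
F^n_\jph-F^n_\jmh=V^n_j\left[G(\rho^n_j,\rho^n_\jpo)-g(\rho^n_j)\right]-V^n_\jmo\left[G(\rho^n_\jmo,\rho^n_j)-g(\rho^n_j)\right]+g(\rho^n_j)\left(V^n_j-V^n_\jmo\right),
\end{align*}
so that no product of an increment of $G$ with an increment of $V$ ever appears. After differencing in $j$, the only surviving ``cross'' term is $\bigl(g(\rho^n_\jpo)-g(\rho^n_j)\bigr)(V^n_j-V^n_\jmo)$, controlled by $\norm{g'}$, and the second difference of $V$ is multiplied by $g(\rho^n_j)$, which gives $\norm{g}$. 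Your handling of the remaining pieces is sound: the summation by parts using $\gamma_k\le\gamma_{k-1}$, the interplay of the $L^\infty$-type bound (releasing one factor $\gamma_0\le\Dx\,\wt(0)$ per nonlocal contribution) with the $\ell^1$-type bound, and the second-order expansion of $v$, which is equivalent to the paper's estimate $\abs{\xi_j-\xi_\jpo}\le2\gamma_0\norm{\rho}$ for the mean-value points. Only note that your displayed bound on $\sum_j(A^n_{j+1}-A^n_j)^2$ is missing the factor $\gamma_0$ that your own closing paragraph correctly insists on keeping.
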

\begin{proof}
We can rewrite the scheme as
\begin{align*}
\rho_j^{n+1}=&\rho_j^n+\lambda \left(G(\rho_\jmo^n,\rho_j^n) V_\jmo^n-G(\rho_j^n,\rho_\jpo^n) V_j^n\right)\\
=&\rho_j^n+a_\jmh^n (\rho_\jmo^n-\rho_j^n)+b_\jph^n(\rho_\jpo^n-\rho_j^n)+\lambda g(\rho_j^n) \left(V_\jmo^n-V_j^n\right)
\end{align*}
with 
\begin{align}
a_\jmh^n&=\begin{cases}
\lambda V_\jmo^n\frac{G(\rho_\jmo^n,\rho_j^n)-G(\rho_j^n,\rho_j^n)}{\rho_\jmo^n-\rho_j^n},\quad &\text{if }\rho_\jmo^n\neq \rho_j^n,\\
0,\quad &\text{if }\rho_\jmo^n= \rho_j^n,\\
\end{cases}\\
b_\jph^n&=\begin{cases}
\lambda V_j^n\frac{G(\rho_j^n,\rho_\jpo^n)-G(\rho_j^n,\rho_j^n)}{\rho_j^n-\rho_\jpo^n},\qquad &\text{if }\rho_\jpo^n\neq \rho_j^n,\\
0,\quad &\text{if }\rho_\jpo^n= \rho_j^n,\\
\end{cases}
\end{align}
with (due to the montonicity and weak Lipschitz continuity) $0\leq a_\jmh^n\leq L_1\lambda$ and $0\leq a_\jmh^n\leq L_2\lambda$.
Next we define
\[\Delta_j^n=\rho_\jpo^n-\rho_j^n.\]
Then, we obtain
\begin{align*}
\Delta_j^{n+1}=&(1-a_\jph^n-b_\jph^n)\Delta_j^n+b_{j+\frac{3}{2}}^n\Delta_\jpo^n+a_\jmh^n \Delta_\jmo^n\\
&+\lambda\left(g(\rho_\jpo^n)(V_j^n-V_\jpo^n)-g(\rho_j^n)(V_\jmo^n-V_j^n)\right).
\end{align*}
Adding a zero the last term can be written as
\begin{align*}
g(\rho_\jpo^n)(V_j^n-V_\jpo^n)-g(\rho_j^n)(V_\jmo^n-V_j^n)=&g'(\zeta_j)\Delta_j^n (V_j^n-V_\jmo^n)\\
&+g(\rho_j^n)\left(-V_\jpo^n+2V_j^n-V_\jmo^n\right),
\end{align*}
with $\zeta_j$ coming form the mean value theorem such that $g'(\zeta_j)=g(\rho_{j+1}^n)-g(\rho_j^n)$.
Now, we consider the difference in the velocities (with $\xi_j$ also coming from the mean value theorem) and
obtain
\begin{align*}
    V_j^n-V_{j-1}^n=v'(\xi_j)\sum_{k=0}^{\Ne-1} \gamma_k \Delta_\jpk^n\quad\text{and}\quad V_\jpo^n-V_{j}^n=v'(\xi_\jpo)\sum_{k=0}^{\Ne-1} \gamma_k \Delta_{\jpk+1}^n.
\end{align*}
This leads us to
\begin{align*}
-V_\jpo^n+2V_j^n-V_\jmo^n=&\gamma_0 v'(\xi_j)\Delta_j^n-\gamma_{{\Ne-1}}v'(\xi_\jpo)\Delta_{j+\Ne}^n+\sum_{k=1}^{\Ne-1} (\gamma_k v'(\xi_j)-\gamma_{k-1} v'(\xi_\jpo))\Delta_{j+k}^n\\
=&\gamma_0 v'(\xi_j)\Delta_j^n-\gamma_{{\Ne-1}}v'(\xi_\jpo)\Delta_{j+\Ne}^n
+\sum_{k=1}^{\Ne-1} (\gamma_k-\gamma_{k-1}) v'(\xi_j)\Delta_{j+k}^n\\
&+\sum_{k=1}^{\Ne-1} \gamma_{k-1}(v'(\xi_j)- v'(\xi_\jpo))\Delta_{j+k}^n.
\end{align*}
As several terms are positive due to the CFL condition \eqref{eq:CFL}, we can put everything together by using also \eqref{eq:boundV} to obtain
\begin{align*}
\abs{\Delta_j^{n+1}}\leq&(1-a_\jph^n-b_\jph^n+\lambda\gamma_0\norm{v'}\left(\norm{g}+\norm{g'}\,\norm{\rho}\right))\abs{\Delta_j^n}+b_{j+\frac{3}{2}}^n\abs{\Delta_\jpo^n}+a_\jmh^n \abs{\Delta_\jmo^n}\\ 
&+\gamma{_{\Ne-1}}\norm{v'}\,\norm{g}\lambda \abs{\Delta_{j+\Ne}^n}+\sum_{k=0}^{\Ne-1} (\gamma_{k-1}-\gamma_k)\norm{v'}\,\norm{g}\lambda\abs{\Delta_{j+k}^n}\\
&+\norm{g}\lambda\sum_{k=1}^{\Ne-1} \gamma_{k-1}\abs{v'(\xi_j)- v'(\xi_\jpo)}\abs{\Delta_{j+k}^n}.
\end{align*}
Summing over $j\in \Z$, rearranging the indices and using $\lambda\gamma_0\leq \Dt \wt(0)$ leaves us with
\begin{small}
\begin{align*}
\sum_{j\in\Z}\abs{\Delta_j^{n+1}}\leq \sum_{j\in\Z}\left( 1+\Delta t \wt(0) \norm{v'}\left(2\norm{g}+\norm{g'}\norm{\rho}\right)+\lambda \norm{v''}\norm{g}\sum_{k=1}^{\Ne-1} \gamma_{k-1}\abs{\xi_{j-k}- \xi_{\jpo-k}}\right)\abs{\Delta_j^n}.
\end{align*}
\end{small}
We need to estimate $\abs{\xi_j-\xi_\jpo}$.
As we used the mean value theorem we know that $\xi_j$ is between
\[R_j^n:=\sum_{k=0}^{\Ne-1} \gamma_k \rho^n_{j+k}\quad \text{and}\quad R_\jpo^n.\]
Hence, we obtain
\begin{align}
\abs{\xi_j-\xi_\jpo}\leq \abs{\xi_j-R_\jpo^n}+\abs{R_\jpo^n-\xi_\jpo}\leq\abs{R_\jpo^n-R_j^n}+\abs{R_{j+2}^n-R_\jpo^n}\leq 2\gamma_0 \norm{\rho}.\label{eq:estimate1}
\end{align}
This follows by similar calculations as done to obtain \eqref{eq:boundV}.

Finally, this leaves us with
\begin{align*}
\sum_{j\in\Z}\abs{\Delta_j^{n+1}}\leq \left( 1+\Delta t \wt(0) \left(\norm{v'}\left(2\norm{g}+\norm{g'}\,\norm{\rho}\right)+2 \norm{v''}\,\norm{g}\,\norm{\rho}\right)\right) \sum_{j\in\Z}\abs{\Delta_j^n}.
\end{align*}
Therefore, we can conclude 
\begin{align*}
\TV(\dr(t^{n+1},\cdot))=\sum_{j\in\Z}\abs{\Delta_j^{n+1}}\leq & \left( 1+\Delta t \wt(0) \left(\norm{v'}\left(2\norm{g}+\norm{g'}\,\norm{\rho}\right)+2 \norm{v''}\,\norm{g}\,\norm{\rho}\right)\right)^{t^{n+1}/\Delta t} \sum_{j\in\Z}\abs{\Delta_j^0}\\
\leq & \exp \left( t^{n+1} \wt(0) \left(\norm{v'}\left(2\norm{g}\norm{g'}\,\norm{\rho}\right)+2 \norm{v''}\,\norm{g}\,\norm{\rho}\right)\right) \sum_{j\in\Z}\abs{\Delta_j^0}\\
\leq & \exp \left( t^{n+1} \wt(0) \left(\norm{v'}\left(2\norm{g}+\norm{g'}\,\norm{\rho}\right)+2 \norm{v''}\,\norm{g}\,\norm{\rho}\right)\right) \TV(\rho_0),
\end{align*}
where we used \cite[Remark 5.4]{eymard2000finitevolume} for the last inequality.
\end{proof}
\begin{remark}
Due to the presence of $g(\rho)$, we cannot rewrite the scheme directly into the form proposed in \cite[Theorem A.1]{abreu2022lagrangian}.
Nevertheless, our proof is similar to the one there and also to the one proposed in \cite{eymard2000finitevolume} for the local case.
\end{remark}
\begin{remark}
Note that the obtained estimate on the total variation is very similar to the one in \cite[Proposition 3.2]{chiarello2018global}.
In fact, the main difference comes from the estimate \eqref{eq:estimate1}, which gives the factor $2 \norm{v''}\,\norm{g}\,\norm{\rho}$.
In contrast, the factor $7/2 \norm{v''}\,\norm{g}$ is obtained in \cite{chiarello2018global}.
Hence, for $\norm{\rho}=\norm{\rho_0}\leq 7/4$, which is the case for most applications since $\norm{\rho_0}\leq 1$ normally holds, our estimate provides a slightly smaller upper bound on the analytical solution.
\end{remark}
Now, we are able to provide an estimate on the continuity in time.
\begin{proposition}[Time continuity estimates]
 Let the Assumptions \ref{ass:model} and the CFL condition \eqref{eq:CFL} hold. Let $\dr$ be defined as in \eqref{eq:rhodelta} with a scheme \eqref{eq:scheme} satisfying the Definitions \ref{def:NLtermdiscrete} and \ref{def:numscheme}, then we obtain the following estimate
\begin{align*}
\Delta x \sum_{j\in\Z} \abs{\rho_j^{n+1}-\rho_j^n}\leq \Delta t \left(\norm{G}\,\norm{v'}+L_1+L_2\right)\sum_{j\in\Z} \abs{\rho_\jpo^n-\rho_j^n}
\end{align*}
\end{proposition}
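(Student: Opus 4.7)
The plan is to mimic the classical time-continuity estimate for monotone finite-volume schemes, adapting it to the nonlocal setting by exploiting the decomposition $F^n_\jph(a,b)=G(a,b)V_j^n$ from Definition~\ref{def:numscheme}(i). First I would rewrite the update as
\begin{equation*}
\rho_j^{n+1}-\rho_j^n=-\lambda\bigl(F_\jph^n(\rho_j^n,\rho_\jpo^n)-F_\jmh^n(\rho_\jmo^n,\rho_j^n)\bigr)
\end{equation*}
and telescope the right-hand side by adding and subtracting both $F_\jph^n(\rho_j^n,\rho_j^n)$ and $F_\jmh^n(\rho_j^n,\rho_j^n)$. This yields three pieces: an ``upper'' piece $F_\jph^n(\rho_j^n,\rho_\jpo^n)-F_\jph^n(\rho_j^n,\rho_j^n)$, a ``lower'' piece $F_\jmh^n(\rho_j^n,\rho_j^n)-F_\jmh^n(\rho_\jmo^n,\rho_j^n)$, and a central piece which, by Definition~\ref{def:numscheme}(i), collapses to $G(\rho_j^n,\rho_j^n)(V_j^n-V_\jmo^n)$.

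Next, the weak Lipschitz estimates \eqref{eq:weakLip} bound the upper and lower pieces by $L_2|\rho_\jpo^n-\rho_j^n|$ and $L_1|\rho_j^n-\rho_\jmo^n|$, respectively. For the central piece I would use the uniform bound $|G(\rho_j^n,\rho_j^n)|\leq\norm{G}$ together with the mean value theorem, exactly as in the derivation of \eqref{eq:boundV}, to write
\begin{equation*}
V_j^n-V_\jmo^n=v'(\xi_j)\sum_{k=0}^{\Ne-1}\gamma_k\bigl(\rho_\jpko^n-\rho_\jpk^n\bigr),
\end{equation*}
which gives $|V_j^n-V_\jmo^n|\leq\norm{v'}\sum_{k=0}^{\Ne-1}\gamma_k|\rho_\jpko^n-\rho_\jpk^n|$.

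Finally I would take absolute values, multiply by $\Dx$, and sum over $j\in\Z$. After a shift of index, the upper and lower pieces together contribute $\Dt(L_1+L_2)\sum_{j\in\Z}|\rho_\jpo^n-\rho_j^n|$. For the central piece, swapping the order of summation gives
\begin{equation*}
\sum_{j\in\Z}\sum_{k=0}^{\Ne-1}\gamma_k|\rho_\jpko^n-\rho_\jpk^n|=\sum_{k=0}^{\Ne-1}\gamma_k\sum_{j\in\Z}|\rho_\jpo^n-\rho_j^n|=\sum_{j\in\Z}|\rho_\jpo^n-\rho_j^n|,
\end{equation*}
using translation invariance of the $\ell^1$ semi-norm together with the normalization $\sum_{k=0}^{\Ne-1}\gamma_k=1$ inherited from Assumption~\ref{ass:model}(4). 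Combining the three contributions yields the asserted constant $\norm{G}\,\norm{v'}+L_1+L_2$.

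The argument is largely bookkeeping; the only mildly delicate point is the central nonlocal term, but the mean value argument already used to obtain \eqref{eq:boundV}, together with the partition-of-unity property of the quadrature weights $\gamma_k$, handles it cleanly. In contrast to the BV-in-space estimate, no CFL condition is required here and no exponential-in-time factor appears.
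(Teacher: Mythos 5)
Your proof is correct and follows essentially the same route as the paper's: the same add-and-subtract of the consistent flux value $G(\rho_j^n,\rho_j^n)$, the weak Lipschitz bounds \eqref{eq:weakLip} for the two local pieces, and the mean value theorem for the difference of nonlocal terms, followed by summing and reindexing. The only cosmetic caveat is that $\sum_{k=0}^{\Ne-1}\gamma_k=\int_0^{\Ne\Dx}\wt(x)\,dx\leq 1$ rather than exactly $1$ (since $\Ne=\lfloor\ndt/\Dx\rfloor$ may truncate the support), which only strengthens the bound.
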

\begin{proof}
We observe
\begin{align*}
\rho_j^{n+1}-\rho_j^n=&\lambda \left(G(\rho_\jmo^n,\rho_j^n)V_\jmo^n-G(\rho_j^n,\rho_\jpo^n)V_j^n\right)\\
=&\lambda\left(G(\rho_\jmo^n,\rho_j^n)(V_\jmo^n-V_j^n)+(G(\rho_\jmo^n,\rho_j^n)\pm G(\rho_j^n,\rho_j^n)-G(\rho_j^n,\rho_\jpo^n))V_j^n\right).
\end{align*}
Hence, we obtain
\begin{align*}
\abs{\rho_j^{n+1}-\rho_j^n}\leq \lambda \left(\norm{G}\norm{v'}\sum_{k=0}^{\Ne-1}\gamma_k \abs{\rho_{j+k+1}^n-\rho_{j+k}^n}+L_2\abs{\rho_{j+1}^n-\rho_j^n}+L_1\abs{\rho_j^n-\rho_\jmo^n}\right).
\end{align*}
Summing over $j\in\Z$ and rearranging the indices yields then the claim.
\end{proof}
From the two latter propositions we can obtain an estimate on the bounded variation in space and time:
\begin{corollary}\label{cor:BVspacetime}
 Let the Assumptions \ref{ass:model} and the CFL condition \eqref{eq:CFL} hold. Let $\dr$ be defined as in \eqref{eq:rhodelta} with a scheme \eqref{eq:scheme} satisfying the Definitions \ref{def:NLtermdiscrete} and \ref{def:numscheme}, then we obtain for any $n\in\N$ 
\begin{align*}
    &\sum_{m=0}^{n-1}\sum_{j\in\Z} \Delta t \abs{\rho_{j+1}^n-\rho_j^n}+\Delta x \abs{\rho_j^{n+1}-\rho_j^n}\leq\\
    &n\Delta t  \left(1+\norm{G}\,\norm{v'}+L_1+L_2\right)\exp \left( t^{n} \wt(0) \left(\norm{v'}\left(2\norm{g}+\norm{g'}\,\norm{\rho}\right)+2 \norm{v''}\,\norm{g}\,\norm{\rho}\right)\right) \TV(\rho_0).
\end{align*}
\end{corollary}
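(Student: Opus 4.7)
The plan is to derive this corollary directly by combining the two immediately preceding propositions, with essentially no new analytic work beyond a summation and a uniform bound of the exponential factor. Since the inner sum on the left-hand side is written with index $n$ but clearly should be the summation index (call it $m$), I would first rewrite the target inequality as a sum over $m=0,\dots,n-1$ of $\Delta t \sum_{j\in\Z}|\rho_{j+1}^m-\rho_j^m|+\Delta x\sum_{j\in\Z}|\rho_j^{m+1}-\rho_j^m|$ and then split this into two separate pieces that match, respectively, the BV-in-space proposition and the time-continuity proposition.

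For the first piece, I would apply the BV-in-space estimate at each time level $t^m$, noting that the exponential prefactor $\exp(t^{m}\wt(0)(\norm{v'}(2\norm{g}+\norm{g'}\,\norm{\rho})+2\norm{v''}\,\norm{g}\,\norm{\rho}))$ is nondecreasing in $m$ and hence bounded by its value at $t^n$. Summing over $m=0,\dots,n-1$ and pulling the common factor out produces the bound $n\Delta t\exp(\cdots)\TV(\rho_0)$. For the second piece, I would first apply the time-continuity estimate to convert $\Delta x\sum_j|\rho_j^{m+1}-\rho_j^m|$ into $\Delta t\,(\norm{G}\norm{v'}+L_1+L_2)\sum_j|\rho_{j+1}^m-\rho_j^m|$, then reuse the BV-in-space estimate on the right-hand side, and finally sum over $m$ with the same uniform bound on the exponential.

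Adding the two contributions, the prefactor combines as $1+\norm{G}\norm{v'}+L_1+L_2$, yielding exactly the stated inequality. There is no real obstacle here, since both building blocks have already been established; the only mild care needed is the monotonicity of the exponential factor in $t^m$ (which is immediate because the exponent is nonnegative and linear in time) so that one can replace every $\exp(t^m\cdots)$ by $\exp(t^n\cdots)$ before performing the sum $\sum_{m=0}^{n-1}\Delta t=n\Delta t$.
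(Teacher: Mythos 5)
Your proposal is correct and is exactly the argument the paper intends: the corollary is stated as an immediate consequence of the BV-in-space and time-continuity propositions, obtained by bounding each exponential factor by its value at $t^n$, summing over $m$, and combining the prefactors into $1+\norm{G}\,\norm{v'}+L_1+L_2$. You also rightly note (and repair) the index typo in the statement, where the summand should carry the index $m$ rather than $n$.
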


\subsection{Discrete entropy inequality and convergence}
To prove the convergence against the entropy solution we need an equivalent discrete formulation.
For a constant $k\in\R$ we denote
\[\mathcal{F}^k_{j+1/2}(u,w)=F_\jph^n(u\wedge k,w\wedge k)-F_\jph^n(u\vee k,w\vee k)\]
with $a\wedge b=\max(a,b)$ and $a \vee b=\min(a,b)$.
\begin{proposition}\label{prop:discreteentropy}
 A numerical scheme \eqref{eq:scheme} satisfying the Definitions \ref{def:NLtermdiscrete} and \ref{def:numscheme}, fulfills
$$\lvert\rho_{j}^{n+1}-k\rvert-|\rho_j^n-k|+\lambda(\mathcal{F}^k_{j+1/2}(\rho_j^n,\rho_{j+1}^n)-\mathcal{F}^k_{j-1/2}(\rho_{j-1}^n,\rho_j^n)) + \frac{\lambda}{2} \sign({\rho_{j}^{n+1}-k}) g(k) (V^{n}_{j}-V^{n}_{j-1})\leq 0.$$
\end{proposition}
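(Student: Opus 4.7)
My plan is to adapt the classical Crandall--Majda argument for three-point monotone schemes. The crucial structural observation is that, for fixed $(j,n)$, the two nonlocal velocities $V^n_\jmo$ and $V^n_j$ entering the fluxes $F^n_\jmh$ and $F^n_\jph$ depend only on states $\rho_\jpo^n,\dots,\rho_{j+\Ne}^n$ that lie \emph{outside} the three-point stencil $\{\rho_\jmo^n,\rho_j^n,\rho_\jpo^n\}$ which updates $\rho_j^{\npo}$. I therefore freeze these two velocities and introduce the auxiliary local update
$$ H(u,v,w) := v - \lambda\bigl[F^n_\jph(v,w) - F^n_\jmh(u,v)\bigr], $$
so that $\rho_j^{\npo}=H(\rho_\jmo^n,\rho_j^n,\rho_\jpo^n)$. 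Parts~(iii)--(iv) of Definition~\ref{def:numscheme}, combined with the bound $\lambda(L_1+L_2)\le 1$ contained in~\eqref{eq:CFL}, guarantee that $H$ is nondecreasing in each of its three arguments.

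\textbf{Comparison at the cutoffs.} Setting $u_i:=\rho_i^n\wedge k$ and $v_i:=\rho_i^n\vee k$ for $i\in\{\jmo,j,\jpo\}$ (with the paper's convention $\wedge=\max$, $\vee=\min$), one has $u_i\ge\rho_i^n\ge v_i$ and $u_i\ge k\ge v_i$. Componentwise monotonicity of $H$ together with the ``defective consistency''
$$ H(k,k,k) = k - \lambda\, g(k)\bigl(V^n_j - V^n_\jmo\bigr) =: K_j^n $$
then yields the one-sided bounds
$$ H(u_\jmo,u_j,u_\jpo)\ \ge\ \max\{\rho_j^{\npo},\,K_j^n\}, \qquad H(v_\jmo,v_j,v_\jpo)\ \le\ \min\{\rho_j^{\npo},\,K_j^n\}. $$
Subtracting and using $\max-\min=\abs{\cdot}$ produces $H(u_\jmo,u_j,u_\jpo) - H(v_\jmo,v_j,v_\jpo)\ge\abs{\rho_j^{\npo} - K_j^n}$. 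On the other hand, a direct computation of the same difference, using $u_j - v_j = \abs{\rho_j^n - k}$ together with the definition of $\mathcal{F}^k_{\jpmh}$, gives
$$ H(u_\jmo,u_j,u_\jpo) - H(v_\jmo,v_j,v_\jpo)\ =\ \abs{\rho_j^n - k} - \lambda\bigl[\mathcal{F}^k_\jph(\rho_j^n,\rho_\jpo^n) - \mathcal{F}^k_\jmh(\rho_\jmo^n,\rho_j^n)\bigr]. $$

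\textbf{Finishing.} Equating the two expressions for $H(u)-H(v)$ and unfolding $\abs{\rho_j^{\npo} - K_j^n} = \abs{(\rho_j^{\npo}-k) + \lambda g(k)(V^n_j - V^n_\jmo)}$ via the elementary lower bound $\abs{a+b}\ge\abs{a}+\sign(a)\,b$ produces the claimed discrete entropy inequality. The nonlocal source term proportional to $\sign(\rho_j^{\npo}-k)\,g(k)(V^n_j - V^n_\jmo)$ is precisely the bookkeeping needed to absorb the failure of consistency $H(k,k,k)\ne k$.

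\textbf{Main obstacle.} The only substantive novelty with respect to the classical local Crandall--Majda proof is that, because the two discrete velocities bracketing cell $j$ generically differ, $H$ fails to be consistent at the constant state $k$. The whole argument is organized around propagating the defect $K_j^n-k = -\lambda g(k)(V^n_j - V^n_\jmo)$ through the monotonicity-based comparison without destroying the clean $\mathcal{F}^k_{\jpmh}$ structure on the left-hand side. A subsidiary check is the three-argument monotonicity of $H$ from the one-sided weak-Lipschitz bounds~\eqref{eq:weakLip} under the CFL condition; this is routine, obtained by splitting $F^n_\jph(v_1,w) - F^n_\jph(v_2,w)$ at the intermediate point $w$ and invoking both clauses of~\eqref{eq:weakLip} in combination with the monotonicity~(iii) of Definition~\ref{def:numscheme}.
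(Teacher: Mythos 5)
Your strategy is the same one the paper uses: it defines the same update map $H$, asserts its monotonicity from Definition~\ref{def:numscheme}, and then defers the Crandall--Majda comparison to the cited references; you have simply written out those details. Two points deserve scrutiny, however.

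First, the coefficient. Your computation of the consistency defect is correct: since $F^n_\jph(k,k)=g(k)V_j^n$, one gets $H(k,k,k)=k-\lambda\, g(k)(V_j^n-V_{j-1}^n)$, and unfolding $\abs{\rho_j^{n+1}-K_j^n}$ as you describe yields the inequality with the full factor $\lambda$ in front of $\sign(\rho_j^{n+1}-k)\,g(k)(V_j^n-V_{j-1}^n)$ --- not the factor $\lambda/2$ appearing in the statement. The $\lambda/2$ is inherited from the symmetric Lax--Friedrichs flux of the reference, where $F_\jph$ averages $V_j^n$ and $V_{j+1}^n$ so that the defect involves $\tfrac12 g(k)(V_{j+1}^n-V_{j-1}^n)$; for the flux $F^n_\jph(a,b)=G(a,b)V_j^n$ of this paper no such halving occurs. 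Your argument therefore proves a statement that differs from the one asserted (the two versions are not equivalent, since the sign term can be negative), and you should not claim it ``produces the claimed inequality'' without reconciling this.

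Second, the monotonicity of $H$ in its middle argument is not the routine check you describe. Monotonicity in the outer arguments follows from (iii) alone, but for the middle argument one must bound $F^n_\jph(v_1,w)-F^n_\jph(v_2,w)$ from above by a constant times $v_1-v_2$. Splitting through the diagonal value $F^n_\jph(w,w)$, as you propose, only gives the bound $L_1(\abs{v_1-w}+\abs{v_2-w})$ via \eqref{eq:weakLip}, which does not control the difference by $L_1\abs{v_1-v_2}$ when $w$ lies outside $[v_2,v_1]$; a nondecreasing function confined to the cone $\abs{F(v,w)-g(w)}\le L_1\abs{v-w}$ can still have arbitrarily steep local increments. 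So under the weak Lipschitz condition alone, the CFL condition \eqref{eq:CFL} does not visibly force $H$ to be nondecreasing in $v$ for the argument configurations arising in the comparison $H(u\wedge k,v\wedge k,w\wedge k)\ge H(u,v,w)$. The paper asserts this monotonicity without proof (and it does hold for the concrete Godunov, Engquist--Osher and Lax--Friedrichs fluxes, which are genuinely Lipschitz), so you have reproduced the paper's gap rather than introduced a new error --- but the specific justification you offer does not close it, and an honest proof must either assume a two-sided local Lipschitz bound or restrict the comparison to diagonal-anchored differences.
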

\begin{proof}
We define 
\[H(u,w,z)(u,w,z)=w-\lambda(F^n_{j+\frac{1}{2}}(w,z)-F^n_{j-\frac{1}{2}}(u,w)),\]
which is monotone in each variable due to our assumptions on the numerical flux in Definition \ref{def:numscheme}.
Now, we can follow \cite{BlandinGoatin2016,chiarello2018global,friedrich2018godunov} to obtain the desired claim.
We omit the details.
\end{proof}
Finally, we are ready to state the proof of convergence for the considered class of numerical schemes.
\begin{proof}[Proof of Theorem \ref{thm:mainresult}]
Due to Proposition \ref{prop:Maximumprinciple} and Corollary \ref{cor:BVspacetime}, we can apply Helly's theorem as presented in \cite[Lemma 5.6]{eymard2000finitevolume} and conclude the existence of a subsequence of approximate solutions that converge to some $\rho\in (L^\infty \cap \BV)(\R^+\times \R; \R)$.
In addition, thanks to Corollary \ref{corollary:L1}, we have $\rho \in L^1(\R^+\times \R;\R)$.
Using the discrete entropy inequality from Proposition \ref{prop:discreteentropy} and Lax-Wendroff type arguments similar to \cite{BlandinGoatin2016} shows that the limit function $\rho$ is a weak entropy solution of \eqref{eq:ND} in the sense of Definition \ref{def:weakentropy}.
The discrete maximum principle is given directly in Proposition \ref{prop:Maximumprinciple} 
\end{proof}

\section{Numerical examples}\label{sec:numerics}
In this section we complement the theoretical results with numerical examples and consider the numerical convergence rates of several numerical schemes satisfying Definition \ref{def:numscheme}.
In Section \ref{sec:numtraffic} we focus on a traffic flow model with Arrhenius look-ahead dynamics belonging to \eqref{eq:ND}.
Section \ref{sec:numsedimentation} considers our approach for a nonlocal sedimentation model which has a different convolution than \eqref{eq:ND:conv}.

\subsection{Arrhenius-type look-ahead traffic flow model}\label{sec:numtraffic}
We first consider a numerical example to compare the schemes derived above, namely the Lax-Friedrichs-type scheme \eqref{eq:LxFmod}, the Godunov-type scheme \eqref{eq:Godunov}, and the Engquist-Osher-type scheme \eqref{eq:EOflux}. 
We also compare the numerical schemes with the Lax-Friedrichs-type scheme of \cite{chiarello2018global}, which is slightly different from our numerical scheme \eqref{eq:LxFmod}.
We choose the diffusion parameter $\alpha$ of \cite{chiarello2018global} for a fair numerical comparison between the two Lax-Friedrichs-type schemes.
Furthermore, the CFL condition is given as in \cite{chiarello2018global}, since this is the most restrictive one.
We will use the following example:
\[g(\rho)=\rho(1-\rho),\quad v(\rho)=\exp(-\rho),\quad \wt(x)=
2(\ndt-x)/\ndt^2,\quad \ndt=0.1.\]
Hence, we consider the Arrhenius-type look-ahead model with a linear decreasing function. 
In particular, in contrast to the case of the nonlocal LWR model, i.e. $g(\rho)=\rho$, all the schemes \eqref{eq:LxFmod}--\eqref{eq:EOflux} differ here.
We consider the discontinuous initial condition:
\[\rho_0(x)=\begin{cases}0.8\quad & x\in[3/4,5/4],\\
0\quad &\text{else}
\end{cases}
\]
at the final time $T=0.5$.
In Figure \ref{fig:trafficconv} we see the approximate solutions on the left and a zoom into the spatial domain on the right, together with a reference solution computed by the Godnuov-type scheme and $\Dx=0.01\cdot2^{-6}$.
It can be seen that the Engquist-Osher and Godunov-type schemes are already closer to the reference solution, while the Lax-Friedrichs-type schemes have a higher numerical diffusion.
However, the newly derived Lax-Friedrichs-scheme \eqref{eq:LxFmod} is closer to the reference solution than the Lax-Friedrichs scheme of \cite{chiarello2018global}.
In addition, the zoom on the right side shows that the solutions of the two Lax-Friedrichs-type schemes, but also of the Engquist-Osher and Godunov-type schemes, differ.

\begin{figure}[h]
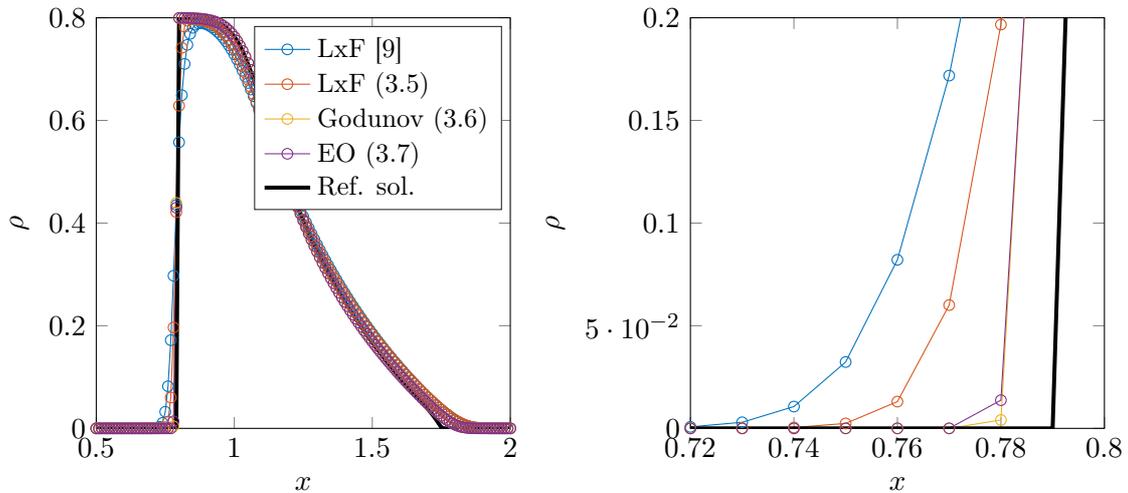

\setlength{\fwidth}{0.7\textwidth}
\begin{center}
\input{ArrheniusConvergence}
\input{ArrheniusConvergenceZoom}
\end{center}
\caption{Approximate solutions obtained by the different schemes from \cite{chiarello2018global} and \eqref{eq:LxFmod}--\eqref{eq:EOflux} with $\Dx=0.01$ and a reference solution (ref. sol.), left, and a zoom into the spatial domain, right.}\label{fig:trafficconv}
\end{figure}

Furthermore, we test the numerical convergence rate in the $L^1$-norm in comparison to the reference solution, still computed by the Godnuov type scheme and $\Dx=0.01\cdot2^{-6}$.
We choose $\Dx=0.01\cdot2^{-n}$ for $n=0,\dots,5$.
Theoretically, we expect a convergence rate of one \rv{and
Table \ref{tab:Arrhenius} shows the corresponding errors. The numerical convergence rates suggest the expected order of one.}
We note that the errors of the Godunov-type scheme \eqref{eq:Godunov} are the smalles.

\begin{table}
\centering
\begin{small}
\begin{tabular}{c| c c c c | c c c c}
&\multicolumn{4}{c|}{$L^1$-errors}&\multicolumn{4}{c}{convergence rate}\\
\hline
$n$& LxF \cite{chiarello2018global} &	LxF \eqref{eq:LxFmod} &	Godunov \eqref{eq:Godunov}&EO \eqref{eq:EOflux}& LxF \cite{chiarello2018global} &	LxF \eqref{eq:LxFmod} &	Godunov \eqref{eq:Godunov}&EO \eqref{eq:EOflux}\\
\hline\hline
0&0.0343&	0.0248&	0.0085&	0.0085& & & &\\
1&0.0178& 	0.0127&	0.0026&	0.0033&0.9482&    0.9739&    	1.6770&1.3738\\
2&0.0091&	0.0064&	0.0013&	0.0016&0.9644&		0.9792&    	1.0711&1.0578\\
3&0.0049&	0.0035&6.6881e-04&8.2489e-04&0.8882&    0.8619&    	0.9133&0.9280\\
4&0.0026&	0.0019&3.4622e-04&4.2017e-04&0.9196&		0.9004&		0.9499&0.9730\\
5&0.0014&	0.0010&1.8495e-04&2.1174e-04&0.9203&		0.8974&		0.9045&0.9887\\
\hline
\end{tabular}
\end{small}
\caption{$L^1$-errors of the different schemes in comparison to a reference solution and the numerical convergence rates for $\Dx=0.01\cdot2^{-n}$.}\label{tab:Arrhenius}
\end{table}

\subsection{Further nonlocal problems: A sedimentation model}\label{sec:numsedimentation}
In this part we want to \rv{ show that the proposed strategy can be applied to other nonlocal problems and that it works well. However, we note that we will not provide a rigorous convergence proof as before.}
We will focus on a sedimentation model.

But first, let us mention that the strategy was already successfully used to derive a Godunov-type scheme in  \cite{friedrich2018godunov} for another traffic flow model including a \rv{rigorous} convergence proof.
In addition, in \cite{colombo2019role} a Godunov scheme for a nonlocal Burgers equation is derived and numerically evaluated in a similar manner.
In both these models the resulting Engquist-Osher scheme is the same as the Godunov-type scheme.
Furthermore, in \cite{burger2022hilliges} a Hilliges-Weidlich-type scheme is derived for a specific nonlocal problem appearing for example in sedimentation.
Due to the modeling equations, the nonlocal term is different from \eqref{eq:ND:conv}, but after approximating the nonlocal term, the numerical flux function of \cite{burger2022hilliges} satisfies Definition \ref{def:numscheme}.

Now, we want to apply the approach to the nonlocal sedimentation model presented in \cite{betancourt2011nonlocal}.
In this case we have 
\[g(\rho)=\rho(1-\rho),\quad v(\rho)=(1-\rho)^4\]
but the nonlocal term is
\[\rv{(\rho\ast\wt)(x)}=\int_{x-2\ndt}^{x+2\ndt}\rho(t,y)\wt(y-x)dx,\]
with a symmetric kernel.
Hence, it does not belong to the class of \eqref{eq:ND}.
Nevertheless, the well-posedness can be proved. For further details we refer to \cite{betancourt2011nonlocal}.
To construct numerical flux functions we apply our approach and approximate the nonlocal term by
\begin{align*}
    V_j^n= v\left(\sum_{k=-\Ne}^{\Ne-1} \gamma_k \rho_{j+k+1}^n\right)
\end{align*}
with $\Ne=2\ndt/\Delta x$ and the weights $\gamma_k$ are defined as before.
As proved in \cite{betancourt2011nonlocal}, the solution inherits a maximum principle between $[0,1]$.
Hence, as long as it is kept by the numerical schemes, the velocity of the nonlocal term remains positive.
So we can apply the numerical flux functions \eqref{eq:LxFmod}--\eqref{eq:EOflux}.
We repeat the numerical test of \cite[Example 2]{betancourt2011nonlocal} for the initial data:
\[\rho_0(x)=\begin{cases}0.01\quad & x>0.2,\\
0\quad &\text{else}
\end{cases}\quad \text{and}\quad \rho_0(x)=\begin{cases}0.6\quad & x>0.2,\\
0\quad &\text{else}\end{cases}.\]
The kernel is set to
\begin{align*}
\wt(x)=\frac{K(x/\ndt)}{\ndt}\quad K(x)=\frac{3}{8}\left(1-\frac{x^2}{4}\right)\text{for }|x|<2\text{ and }K(x)=0\text{ else}.
\end{align*}
We compare our numerical schemes with the Lax-Friedrichs scheme used in \cite{betancourt2011nonlocal}.
For the latter one the convergence is proven.
\rv{As in \cite{betancourt2011nonlocal} we choose $\lambda=0.2$.
Table \ref{tab:Sedi} shows the error terms at $T=3$ for the first initial condition.
The reference solution is computed using the Lax-Friedrichs scheme of \cite{betancourt2011nonlocal} with $\Dx=0.2\cdot 2^{-9}$.
The Engquist-Osher and Godunov type schemes obtain their expected order of convergence.
We note that they coincide in this example.
The numerical convergence rates for the Lax-Friedrichs type schemes are lower.
Nevertheless, the error terms of all schemes suggest the convergence to the correct entropy solution.
The solution with $\Dx=0.2\cdot 2^{-5}$ can be seen in Figure \ref{fig:sedimentation}, left.
The difference between the two Lax-Friedrichs type schemes is not visible.
\begin{table}[tbh]
\centering
\begin{small}
\begin{tabular}{c| c c c c | c c c c}
&\multicolumn{4}{c|}{$L^1$-errors}&\multicolumn{4}{c}{convergence rate}\\
\hline
$n$& LxF \cite{betancourt2011nonlocal} &	LxF \eqref{eq:LxFmod} &	Godunov \eqref{eq:Godunov}&EO \eqref{eq:EOflux}& LxF \cite{betancourt2011nonlocal} &	LxF \eqref{eq:LxFmod} &	Godunov \eqref{eq:Godunov}&EO \eqref{eq:EOflux}\\
\hline\hline
0     & 0.0134 & 0.0133 &     0.0055 &     0.0055 & & & &\\
1     & 0.0090 & 0.0089 &     0.0033 &     0.0033 &   0.5634 &   0.5715 &   0.7239 &   0.7239\\
2     & 0.0063 & 0.0062 &     0.0022 &     0.0022 &   0.5248 &   0.5210 &   0.5588 &   0.5588\\
3     & 0.0043 & 0.0043 &     0.0015 &     0.0015 &   0.5455 &   0.5469 &   0.5702 &   0.5702\\
4     & 0.0029 & 0.0029 & 9.3067e-04 & 9.3067e-04 &   0.5725 &   0.5740 &   0.7010 &   0.7010\\
5     & 0.0019 & 0.0019 & 4.5755e-04 & 4.5755e-04 &   0.5885 &   0.5864 &   1.0243 &   1.0243\\
\hline
\end{tabular}
\end{small}
\caption{$L^1$-errors of the different schemes in comparison to a reference solution and the numerical convergence rates for $\Dx=0.2\cdot2^{-n}$ at $T=3$ for the first initial condition.}\label{tab:Sedi}
\end{table}
\begin{figure}[tbh]
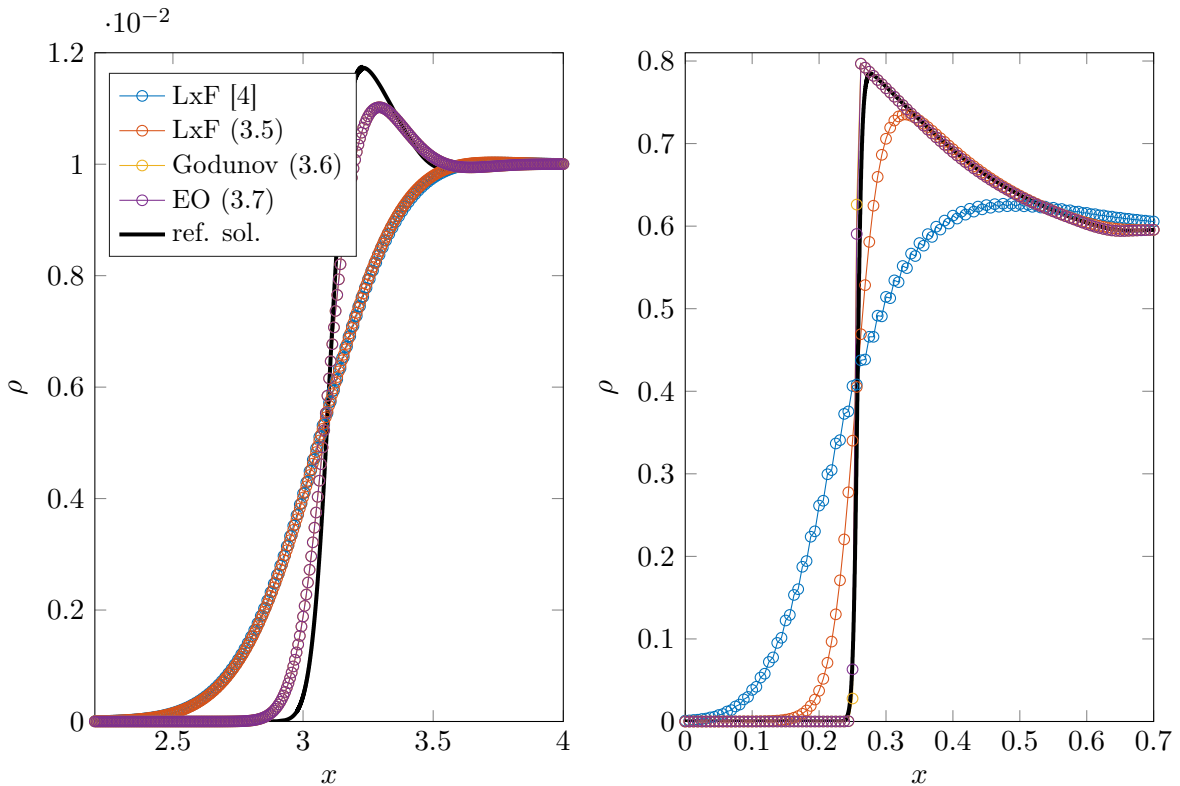

\setlength{\fwidth}{0.6\textwidth}
\begin{center}
\input{Sedimentation001}
\input{Sedimentation06}
\end{center}
\caption{\rv{Approximate solutions obtained by the different schemes from \cite{betancourt2011nonlocal} and \eqref{eq:LxFmod}--\eqref{eq:EOflux} with $\Dx=0.2\cdot 2^{-5}$ and a reference solution (ref. sol.) at $T=3$ for the first initial condition, left and at $T=1$ for the second initial condition, right.}}\label{fig:sedimentation}
\end{figure}
Table \ref{tab:Sedi2} presents the error terms for the second initial condition at $T=1$.
The reference solution is computed using the Lax-Friedrichs scheme of \cite{betancourt2011nonlocal} with $\Dx=0.2\cdot 2^{-10}$.
Again, the error terms indicate the convergence to the entropy solution.
In this example, we see that the newly derived Lax-Friedrichs scheme \eqref{eq:LxFmod} is closer to the reference solution than the scheme from \cite{betancourt2011nonlocal}.
Furthermore, the Engquist-Osher and Godunov type schemes differ. 
The numerical convergence rate of the newly derived Lax-Friedrichs scheme \eqref{eq:LxFmod} seems to converge to one.
The convergence rates of the Engquist-Osher and Godunov type schemes drop below one for $\Dx=0.2\cdot 2^{-5}$.
This behavior occurs because these schemes have less numerical viscosity than the Lax-Friedrichs scheme of \cite{betancourt2011nonlocal}.
Hence, they appear to provide a more accurate solution than the reference solution, even for a much larger step size.
This can be seen in Figure \ref{fig:sedimentation}, right.
It can also be seen that the newly derived Lax-Friedrichs scheme \eqref{eq:LxFmod} outperforms the Lax-Friedrichs scheme of \cite{betancourt2011nonlocal} in terms of accuracy.
\begin{table}[tbh]
\centering
\begin{small}
\begin{tabular}{c| c c c c | c c c c }
&\multicolumn{4}{c|}{$L^1$-errors}&\multicolumn{4}{c}{convergence rate}\\
\hline
$n$& LxF \cite{betancourt2011nonlocal} &	LxF \eqref{eq:LxFmod} &	Godunov \eqref{eq:Godunov}&EO \eqref{eq:EOflux}& LxF \cite{betancourt2011nonlocal} &	LxF \eqref{eq:LxFmod} &	Godunov \eqref{eq:Godunov}&EO \eqref{eq:EOflux}\\
\hline\hline
0    &0.4479 & 0.2005 &  0.1164 & 0.1163 & & & &\\
1    &0.3182 & 0.1429 &  0.0407 & 0.0423 &   0.4933 &   0.4891 &   1.5145 &   1.4597\\
2    &0.2118 & 0.0905 &  0.0333 & 0.0333 &   0.5876 &   0.6580 &   0.2915 &   0.3454\\
3    &0.1463 & 0.0583 &  0.0140 & 0.0140 &   0.5337 &   0.6344 &   1.2450 &   1.2453\\
4    &0.0997 & 0.0335 &  0.0060 & 0.0060 &   0.5534 &   0.8021 &   1.2192 &   1.2190\\
5    &0.0661 & 0.0168 &  0.0040 & 0.0036 &   0.5917 &   0.9973 &   0.5890 &   0.7582\\
\hline
\end{tabular}
\end{small}
\caption{\rv{$L^1$-errors of the different schemes in comparison to a reference solution and the numerical convergence rates for $\Dx=0.2\cdot2^{-n}$ at $T=1$ for the second initial condition.}}\label{tab:Sedi2}
\end{table}
}

\section{Conclusion}\label{sec:conclusion}
In this work we have presented a general approach to treat a specific class of nonlocal conservation laws.
The nonlocal term is approximated by an appropriate quadrature rule, and then explicit conditions for a numerical flux function on the reduced problem are presented that guarantee convergence.
These conditions share some similarities with the class of monotone schemes for local conservation laws.
Numerical examples show that the approach can be applied to other classes of nonlocal conservation laws.
Furthermore, other schemes from the literature use the same approach for different nonlocal conservation laws.

Future work could include generalizing the concept to other nonlocal terms as well as nonlocal multidimensional balance laws.
Another interesting topic would be the behavior of the schemes for $\ndt \to 0$. 
Depending on the modeling equation considered, a convergence to a local conservation law can be obtained on the analytic level.
Similar to the work done in \cite{huang2023asymptotically} it would be interesting to see, if the derived class of numerical schemes is asymptotically compatible, i.e. converges to the correct local solution for $\ndt\to 0$.

\section*{Acknowledgments}
J. F. is supported by the German Research Foundation (DFG) under grant HE 5386/18-1, 19-2, 22-1, 23-1. S. R. is supported by IIPE, Visakhapatnam, India, under the IRG grant number IIPE/DORD/IRG/001 and  NBHM, DAE, India (Ref. No. 02011/46/2021 NBHM(R.P.)/R \& D II/14874).

\section*{Conflict of interest}

The authors declare there is no conflict of interest.
\bibliographystyle{siam}
\bibliography{mysources.bib}

\begin{thebibliography}{10}

\bibitem{abreu2022lagrangian}
{\sc E.~Abreu, J.~Juajibioy, W.~Lambert, et~al.}, {\em Lagrangian-eulerian
  approach for nonlocal conservation laws}, Journal of Dynamics and
  Differential Equations,  (2022), pp.~1--47.

\bibitem{aggarwal2015nonlocal}
{\sc A.~Aggarwal, R.~M. Colombo, and P.~Goatin}, {\em Nonlocal systems of
  conservation laws in several space dimensions}, SIAM J. Numer. Anal., 53
  (2015), pp.~963--983.

\bibitem{amorim2015numerical}
{\sc P.~Amorim, R.~M. Colombo, and A.~Teixeira}, {\em On the numerical
  integration of scalar nonlocal conservation laws}, ESAIM Math. Model. Numer.
  Anal., 49 (2015), pp.~19--37.

\bibitem{betancourt2011nonlocal}
{\sc F.~Betancourt, R.~B\"urger, K.~H. Karlsen, and E.~M. Tory}, {\em On
  nonlocal conservation laws modelling sedimentation}, Nonlinearity, 24 (2011),
  pp.~855--885.

\bibitem{BlandinGoatin2016}
{\sc S.~Blandin and P.~Goatin}, {\em Well-posedness of a conservation law with
  non-local flux arising in traffic flow modeling}, Numer. Math., 132 (2016),
  pp.~217--241.

\bibitem{burger2022hilliges}
{\sc R.~Bürger, H.~Contreras, and L.~Villada}, {\em A hilliges-weidlich-type
  scheme for a one-dimensional scalar conservation law with nonlocal flux},
  Netw. Heterog. Media,  (to appear).

\bibitem{ChalonsGoatinVillada2018}
{\sc C.~Chalons, P.~Goatin, and L.~M. Villada}, {\em High-order numerical
  schemes for one-dimensional nonlocal conservation laws}, SIAM J. Sci.
  Comput., 40 (2018), pp.~A288--A305.

\bibitem{friedrich2020onetoone}
{\sc F.~A. Chiarello, J.~Friedrich, P.~Goatin, S.~G{\"o}ttlich, and O.~Kolb},
  {\em A non-local traffic flow model for 1-to-1 junctions}, European J. Appl.
  Math., 31 (2020), pp.~1029--1049.

\bibitem{chiarello2018global}
{\sc F.~A. Chiarello and P.~Goatin}, {\em Global entropy weak solutions for
  general non-local traffic flow models with anisotropic kernel}, ESAIM Math.
  Model. Numer. Anal., 52 (2018), pp.~163--180.

\bibitem{chiarello2019multiclass}
\leavevmode\vrule height 2pt depth -1.6pt width 23pt, {\em Non-local
  multi-class traffic flow models}, Netw. Heterog. Media, 14 (2019),
  pp.~371--387.

\bibitem{chiarello2020lagrangian}
{\sc F.~A. Chiarello, P.~Goatin, and L.~M. Villada}, {\em
  Lagrangian-antidiffusive remap schemes for non-local multi-class traffic flow
  models}, Computational and Applied Mathematics, 39 (2020), pp.~1--22.

\bibitem{colombo2019role}
{\sc M.~Colombo, G.~Crippa, M.~Graff, and L.~V. Spinolo}, {\em On the role of
  numerical viscosity in the study of the local limit of nonlocal conservation
  laws}, ESAIM: M2AN, 55 (2021), pp.~2705--2723.

\bibitem{colombo2012class}
{\sc R.~M. Colombo, M.~Garavello, and M.~L\'ecureux-Mercier}, {\em A class of
  nonlocal models for pedestrian traffic}, Math. Models Methods Appl. Sci., 22
  (2012), p.~1150023.

\bibitem{crandall1980monotone}
{\sc M.~G. Crandall and A.~Majda}, {\em Monotone difference approximations for
  scalar conservation laws}, Math. Comp., 34 (1980), pp.~1--21.

\bibitem{goettlich2010supplychains}
{\sc C.~D'Apice, S.~G\"{o}ttlich, M.~Herty, and B.~Piccoli}, {\em Modeling,
  simulation, and optimization of supply chains}, Society for Industrial and
  Applied Mathematics (SIAM), Philadelphia, PA, 2010.
\newblock A continuous approach.

\bibitem{engquist1981one}
{\sc B.~Engquist and S.~Osher}, {\em One-sided difference approximations for
  nonlinear conservation laws}, Mathematics of Computation, 36 (1981),
  pp.~321--351.

\bibitem{eymard2000finitevolume}
{\sc R.~Eymard, T.~Gallou\"{e}t, and R.~Herbin}, {\em Finite volume methods},
  in Handbook of numerical analysis, {V}ol. {VII}, Handb. Numer. Anal., VII,
  North-Holland, Amsterdam, 2000, pp.~713--1020.

\bibitem{friedrich2021network}
{\sc J.~Friedrich, S.~G\"ottlich, and M.~Osztfalk}, {\em Network models for
  nonlocal traffic flow}, ESAIM: M2AN, 56 (2022), pp.~213--235.

\bibitem{friedrich2020nonlocal}
{\sc J.~Friedrich, S.~G\"ottlich, and E.~Rossi}, {\em Nonlocal approaches for
  multilane traffic models}, Commun. Math. Sci., 19 (2021), pp.~2291--2317.

\bibitem{friedrich2019maximum}
{\sc J.~Friedrich and O.~Kolb}, {\em Maximum principle satisfying {CWENO}
  schemes for nonlocal conservation laws}, SIAM J. Sci. Comput., 41 (2019),
  pp.~A973--A988.

\bibitem{friedrich2018godunov}
{\sc J.~Friedrich, O.~Kolb, and S.~G\"{o}ttlich}, {\em A {G}odunov type scheme
  for a class of {LWR} traffic flow models with non-local flux}, Netw. Heterog.
  Media, 13 (2018), pp.~531--547.

\bibitem{goatin2016well}
{\sc P.~Goatin and S.~Scialanga}, {\em Well-posedness and finite volume
  approximations of the lwr traffic flow model with non-local velocity},
  Networks and Heterogeneous Media, 11 (2016), pp.~107--121.

\bibitem{GodlewskiRaviart}
{\sc E.~Godlewski and P.-A. Raviart}, {\em Numerical approximation of
  hyperbolic systems of conservation laws}, vol.~118 of Applied Mathematical
  Sciences, Springer-Verlag, New York, 1996.

\bibitem{godunov1959}
{\sc S.~K. Godunov}, {\em A difference method for numerical calculation of
  discontinuous solutions of the equations of hydrodynamics}, Mat. Sb. (N.S.),
  47 (89) (1959), pp.~271--306.

\bibitem{gottlich2014modeling}
{\sc S.~G{\"o}ttlich, S.~Hoher, P.~Schindler, V.~Schleper, and A.~Verl}, {\em
  Modeling, simulation and validation of material flow on conveyor belts},
  Appl. Math. Model., 38 (2014), pp.~3295--3313.

\bibitem{holden2015front}
{\sc H.~Holden and N.~H. Risebro}, {\em Front tracking for hyperbolic
  conservation laws}, vol.~152, Springer, 2015.

\bibitem{huang2023asymptotically}
{\sc K.~Huang and Q.~Du}, {\em Asymptotically compatibility of a class of
  numerical schemes for a nonlocal traffic flow model}, arXiv preprint
  arXiv:2301.00803,  (2023).

\bibitem{KeimerPflug2017}
{\sc A.~Keimer and L.~Pflug}, {\em Existence, uniqueness and regularity results
  on nonlocal balance laws}, J. Differential Equations, 263 (2017),
  pp.~4023--4069.

\bibitem{KEIMER2023}
{\sc A.~Keimer and L.~Pflug}, {\em Nonlocal balance laws – an overview over
  recent results}, Handbook of Numerical Analysis, Elsevier, 2023.

\bibitem{keimer2018multi}
{\sc A.~Keimer, L.~Pflug, and M.~Spinola}, {\em Existence, uniqueness and
  regularity of multi-dimensional nonlocal balance laws with damping}, J. Math.
  Anal. Appl., 466 (2018), pp.~18--55.

\bibitem{keimer2018bounded}
\leavevmode\vrule height 2pt depth -1.6pt width 23pt, {\em Nonlocal scalar
  conservation laws on bounded domains and applications in traffic flow}, SIAM
  J. Math. Anal., 50 (2018), pp.~6271--6306.

\bibitem{kruvzkov1970first}
{\sc S.~N. Kru{\v{z}}kov}, {\em First order quasilinear equations with several
  independent variables.}, Mat. Sb. (N.S.), 81 (123) (1970), pp.~228--255.

\bibitem{lax1954scheme}
{\sc P.~D. Lax}, {\em Weak solutions of nonlinear hyperbolic equations and
  their numerical computation}, Comm. Pure Appl. Math., 7 (1954), pp.~159--193.

\bibitem{leveque1992numerical}
{\sc R.~J. LeVeque}, {\em Numerical methods for conservation laws}, vol.~214,
  Springer, 1992.

\bibitem{leveque2002finite}
\leavevmode\vrule height 2pt depth -1.6pt width 23pt, {\em Finite volume
  methods for hyperbolic problems}, vol.~31, Cambridge university press, 2002.

\bibitem{rossi2020well}
{\sc E.~Rossi, J.~Wei{\ss}en, P.~Goatin, and S.~G{\"o}ttlich}, {\em
  Well-posedness of a non-local model for material flow on conveyor belts},
  ESAIM: Mathematical Modelling and Numerical Analysis, 54 (2020),
  pp.~679--704.

\bibitem{thomas2013numerical}
{\sc J.~W. Thomas}, {\em Numerical partial differential equations: finite
  difference methods}, vol.~22, Springer Science \& Business Media, 2013.

\end{thebibliography}
\end{document}